\def\@settitle{%
  \vspace*{-20pt}
  \begin{flushleft}%
    \baselineskip14\p@\relax
    \normalfont\bfseries\LARGE
    \@title
  \end{flushleft}%
}
\def\@setauthors{%
  \begingroup
  \def\thanks{\protect\thanks@warning}%
  \trivlist
  \large \@topsep30\p@\relax
  \advance\@topsep by -\baselineskip
  \item\relax
  \author@andify\authors
  \def\\{\protect\linebreak}%
  \authors
  \ifx\@empty\contribs
  \else
    ,\penalty-3 \space \@setcontribs
    \@closetoccontribs
  \fi
  \normalfont
  \endtrivlist
  \endgroup
}
\def\@setaddresses{\par
  \nobreak \begingroup\raggedright
  \small
  \def\author##1{\nobreak\addvspace\smallskipamount}%
  \def\\{\unskip, \ignorespaces}%
  \interlinepenalty\@M
  \def\address##1##2{\begingroup
    \par\addvspace\bigskipamount\noindent
    \@ifnotempty{##1}{(\ignorespaces##1\unskip) }%
    {\ignorespaces##2}\par\endgroup}%
  \def\curraddr##1##2{\begingroup
    \@ifnotempty{##2}{\nobreak\noindent\curraddrname
      \@ifnotempty{##1}{, \ignorespaces##1\unskip}\/:\space
      ##2\par}\endgroup}%
  \def\email##1##2{\begingroup
    \@ifnotempty{##2}{\smallskip\nobreak\noindent E-mail address%
      \@ifnotempty{##1}{, \ignorespaces##1\unskip}\/:\space
      \ttfamily##2\par}\endgroup}%
  \def\urladdr##1##2{\begingroup
    \def~{\char`\~}%
    \@ifnotempty{##2}{\nobreak\noindent\urladdrname
      \@ifnotempty{##1}{, \ignorespaces##1\unskip}\/:\space
      \ttfamily##2\par}\endgroup}%
  \addresses
  \endgroup
  \global\let\addresses=\@empty
}
\def\@setabstracta{%
    \ifvoid\abstractbox
  \else
    \skip@25\p@ \advance\skip@-\lastskip
    \advance\skip@-\baselineskip \vskip\skip@
    \box\abstractbox
    \prevdepth\z@ 
    \vskip-10pt
  \fi
}
\let\oldtocsection=\tocsection
\let\oldtocsubsection=\tocsubsection
\let\oldtocsubsubsection=\tocsubsubsection
\renewcommand{\tocsection}[2]{\hspace{0em}\oldtocsection{#1}{#2}}
\renewcommand{\tocsubsection}[2]{\hspace{2em}\oldtocsubsection{#1}{#2}}
\renewcommand{\tocsubsubsection}[2]{\hspace{4em}\oldtocsubsubsection{#1}{#2}}
\renewenvironment{abstract}{%
  \ifx\maketitle\relax
    \ClassWarning{\@classname}{Abstract should precede
      \protect\maketitle\space in AMS document classes; reported}%
  \fi
  \global\setbox\abstractbox=\vtop \bgroup
    \normalfont\small
    \list{}{\labelwidth\z@
      \leftmargin0pc \rightmargin\leftmargin
      \listparindent\normalparindent \itemindent\z@
      \parsep\z@ \@plus\p@
      
    }%
    \item[\hskip\labelsep\bfseries\abstractname.]%
}{%
  \endlist\egroup
  \ifx\@setabstract\relax \@setabstracta \fi
}
\def\section{\@startsection{section}{1}%
  \z@{-1.2\linespacing\@plus-.5\linespacing}{.8\linespacing}%
  {\normalfont\bfseries\large}}
\def\subsection{\@startsection{subsection}{2}%
  \z@{-.8\linespacing\@plus-.3\linespacing}{.3\linespacing\@plus.2\linespacing}%
  {\normalfont\bfseries}}
\def\subsubsection{\@startsection{subsubsection}{3}%
  \z@{.7\linespacing\@plus.1\linespacing}{-1.5ex}%
  {\normalfont\itshape}}
\def\@secnumfont{\bfseries}
\def\Z{\mathbb{Z}}
\def\R{\mathbb{R}}
\def\C{\mathbb{C}}
\def\d{\partial}
\def\tilde{\widetilde}
\def\sign{\operatorname{sign}}
\def\rank{\operatorname{rank}}
\def\CAT{\operatorname{CAT}}
\def\int{\operatorname{int}}
\def\+{\oplus}
\def\vol{\operatorname{vol}}
\def\inj{\textup{inj}}
\theoremstyle{plain}
\newtheorem{theorem}{Theorem}[section]
\newtheorem{proposition}[theorem]{Proposition}
\newtheorem{lemma}[theorem]{Lemma}
\theoremstyle{definition}
\newtheorem{definition}[theorem]{Definition}
\newtheorem{remark}[theorem]{Remark}
\theoremstyle{definition}
\theoremstyle{remark}
\newcommand{\nonamethmname}{}
\NewDocumentEnvironment{genthm}{O{plain}m}
 {\renewcommand{\nonamethmname}{#2}\begin{nonamethm#1}}
 {\end{nonamethm#1}}
\NewDocumentEnvironment{genthm*}{O{plain}mo}
 {\renewcommand{\nonamethmname}{#2}%
  \IfNoValueTF{#3}
    {\begin{nonamethm#1}\relax}%
    {\begin{nonamethm#1}[#3]}%
  \mbox{}}
 {\end{nonamethm#1}}
\def\to{\mathchoice{\longrightarrow}{\rightarrow}{\rightarrow}{\rightarrow}}
\newcommand{\shortxra}[2][]{\ext@arrow 0359\rightarrowfill@{#1}{#2}}
\def\longrightarrowfill@{\arrowfill@\relbar\relbar\longrightarrow}
\newcommand{\longxra}[2][]{\ext@arrow 0359\longrightarrowfill@{#1}{#2}}
\begin{document}

\title
{Bounds on Cheeger-Gromov invariants and simplicial complexity of triangulated manifolds}

\subjclass[2020]{57N16, 57N70, 57Q20, 55U10, 55U15}

\author{Geunho Lim$\dag$}
\thanks{$\dag$ Partially supported by the National Research Foundation of Korea grant 2019R1A3B2067839.}
\address{Department of Mathematics, University of California, Santa Barbara, CA, United States}
\curraddr{Einstein Institute of Mathematics, Hebrew University of Jerusalem, Jerusalem, Israel}
\email{limg@ucsb.edu}

\author{Shmuel Weinberger$\dag\dag$}
\thanks{$\dag\dag$ Partially supported by the National Science Foundation grant DMS-2105451.}
\address{Department of Mathematics, University of Chicago, IL, United States}
\curraddr{}
\email{shmuel@math.uchicago.edu}

\thanks{}
\maketitle

\begin{abstract}
    We show the existence of linear bounds on Wall $\rho$-invariants of PL manifolds, employing a new combinatorial concept of $G$-colored polyhedra. As application, we show that how the number of h-cobordism classes of manifolds simple homotopy equivalent to a lens space with $V$ simplices and the fundamental group of $\mathbb{Z}_n$ grows in $V$. Furthermore we count the number of homotopy lens spaces with bounded geometry in $V$. Similarly, we give new linear bounds on Cheeger-Gromov $\rho$-invariants of PL manifolds endowed with a faithful representation also. A key idea is to construct a cobordism with a linear complexity whose boundary is $\pi_1$-injectively embedded, using relative hyperbolization. As application, we study the complexity theory of high-dimensional lens spaces. Lastly we show the density of $\rho$-invariants over manifolds homotopy equivalent to a given manifold for certain fundamental groups. This implies that the structure set is not finitely generated.
\end{abstract}

\tableofcontents

\section{Introduction and main results}

Invariants of the type we consider in this paper originally arose in the context of finite (quotient groups of) fundamental groups, in work of Browder-Livesay on classifying free involutions on the sphere~\cite{BL}. They were related by Hirzebruch~\cite{Hir68} to invariants of Atiyah–Bott~\cite{AB}, which they had used to show that two different lens spaces are not h-cobordant, and later by Atiyah-Patodi-Singer~\cite{APS} to $\eta$-invariants. For manifolds with a finite fundamental group, these invariants play a crucial role in augmenting characteristic classes to completely classify all manifolds within a given simple homotopy type as Wall observed explicitly in the case of homotopy lens spaces (see~\cite{Wall99},~\cite{CW21}).

Cheeger and Gromov~\cite{CG} introduced a natural $L^2$ analog of the above invariants for manifolds with an infinite fundamental group. These invariants, in addition to their original application in the integrality formula for specific characteristic classes of complete Riemannian manifolds with bounded curvature and finite volume, have been employed to show that certain structure sets of manifolds are infinite~\cite{CW}. We will see that similar arguments show that these structure sets are not only infinite but, also infinitely generated. Moreover, these invariants (for the Dirac operator replacing the signature operator) play a crucial role in proving that moduli spaces of metrics with positive scalar curvature have infinitely many components~\cite{PT}. Other applications include three-dimensional topological knot cobordism~\cite{COT} and the study of the complexity of 3-manifolds in various senses~\cite{Cha16}.

Some of these applications make use of an important inequality of Cheeger and Gromov~\cite{CG85} which holds for smooth Riemannian manifolds of bounded geometry:
\[
\rho^{(2)}(M) \leq C(d, \inj, K) \cdot \vol(M)
\]
where $d = \dim(M)$, $\inj$ is a lower bound on the injectivity radius, and $K$ is a bound on the absolute value of the sectional curvature of $M$. (We will refer to such bounds on a Riemannian metric as \emph{a bound on its geometry.})

Based on this inequality, Gromov conjectured~\cite{Gro82} that for manifolds with bounded geometry, if they are nullcobordant, then they bound other (bounded geometry) manifolds with at most a linear increase in volume. Such estimates would provide a topological explanation for the aforementioned inequality. The validity of this conjecture remains unknown, but some progress has been made in~\cite{CDMW17}.

In this paper, we will consider analogues of these invariants and prove analogous inequalities for PL manifolds, where the volume of the manifold is replaced by the total number of simplices it contains. Extending these invariants to PL (and even topological) manifolds is straightforward using a bordism argument presented in~\cite[Section 13]{Wall99}. Furthermore, these invariants can be generalized to Witt spaces (as demonstrated in~\cite{Siegel} and~\cite{ALMP}), and our results are expected to hold in this broader context as well. We intend to pursue this direction in a future paper. In contrast to the analytical approach employed by Cheeger and Gromov, we rely on purely geometric arguments.

This inequality is \emph{not} the natural analogue of the Cheeger-Gromov inequality for the PL setting. This is because we do \emph{not} impose any local condition on the geometry. In other words, the number of neighbors of a vertex can be unbounded in this collection of manifolds. Additionally, we don’t know what the smooth analogue of our inequality is. Perhaps it involves the “engulfing radius”, which is the largest $r$ such that metric balls of radius $r$ in $M$ are included in larger smooth contractible balls.

We will soon discuss the precise statement of our results and some applications. However, inspired by Gromov's heuristic reasoning, it is tempting to conjecture that the number of simplices in PL cobordism (and Witt cobordism) behaves linearly or perhaps almost linearly. Currently we have no idea how to prove such a result. A proof has been given by~\cite{MW} for PL manifolds of bounded geometry of the analogue of the result in~\cite{CDMW17} i.e. of almost linearity. The implied constants involved in this proof depend on the bounds imposed on the local geometry in an unknown manner.

We now state our main theorem (leaving the definition of the $\rho$-invariants to the next section).

\begin{theorem}\label{thm:AS}
If $M$ is an odd dimensional PL manifold and one is given a homomorphism $\alpha:\pi_1(M) \to G$ for a finite group $G$, then $\lvert \rho_{g}(M) \rvert \leq C(d) \cdot \lvert G \rvert  \cdot \Delta (M)$, for any nontrivial element $g$ in $G$.
\end{theorem}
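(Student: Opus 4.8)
The plan is to realize $\rho_g(M)$ as a signature-type defect of a bounded bordism and then control the number of simplices in that bordism linearly in $\Delta(M)$. Since $G$ is a finite abelian group, the bordism group $\Omega^{\mathrm{PL}}_d(BG)$ is finite, so some multiple $N\cdot[M,\alpha]$ bounds; more efficiently, one can work over $\Q$ and use that $\Omega^{\mathrm{PL}}_*(BG)\otimes\Q \cong \H_*(BG;\Q)\otimes \Omega^{\mathrm{PL}}_*(\mathrm{pt})\otimes\Q$, which for $G$ finite reduces to $\Omega^{\mathrm{PL}}_*(\mathrm{pt})\otimes\Q$; hence $[M,\alpha]$ is rationally trivial and some bounded multiple bounds a PL manifold $W$ with a map to $BG$. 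The $\rho$-invariant $\rho_g(M)$ is then computed, via the $G$-signature theorem / Atiyah–Patodi–Singer bordism formula as in \cite[Chapter 13]{Wall99}, as a difference of equivariant signatures on the associated $G$-cover of $W$, divided by $N$. Each term is a sum over characters $\chi$ of $G$ of signatures $\operatorname{sign}_\chi(W)$, and $|\operatorname{sign}_\chi(W)|\le \dim \H_{d/2+1}(W;\C)$ is bounded by the number of $(d/2{+}1)$-simplices, hence by $\Delta(W)$.

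So the crux is the \emph{efficient bounding} step: given $(M,\alpha)$ triangulated with $\Delta(M)$ simplices, produce a bounded multiple $N[M,\alpha]$ (with $N=N(d)$ depending only on dimension, not on $M$) bounding a triangulated $W$ with $\Delta(W)\le C(d)\cdot|G|\cdot\Delta(M)$. Here the paper's promised machinery of $G$-colored polyhedra should do the work: a $G$-coloring packages the classifying map to $BG$ combinatorially, and one builds the nullbordism simplex-by-simplex, attaching handles indexed by the simplices of $M$ so that the simplex count of $W$ grows linearly in that of $M$ with a constant that absorbs the local combinatorics of $G$. The factor $|G|$ enters because passing to the $G$-cover $\widetilde W$ (needed to define the equivariant signatures, hence $\rho_g$) multiplies the simplex count by $|G|$. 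I would first handle the case $G$ cyclic, where lens-space bordism is classical and one can even write down the nullbordism by hand, and then reduce the general finite abelian case to cyclic summands.

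The main obstacle is controlling $N$ and the simplicial complexity of $W$ \emph{simultaneously}: a naive nullbordism from finiteness of $\Omega^{\mathrm{PL}}_d(BG)$ gives no control on $\Delta(W)$ whatsoever, and a naive handle decomposition gives no control on $N$. The $G$-colored-polyhedra technology is precisely what is needed to get a genuinely \emph{local}, simplex-local construction whose output size is linear. A secondary technical point is checking that the APS/Wall bordism formula for the PL $\rho$-invariant is insensitive to the choices made (the coloring, the particular $W$), which follows because the difference of any two such $W$'s is a closed manifold over $BG$ on which the relevant signature combination vanishes rationally by the Hirzebruch signature theorem (the contribution is a characteristic number that dies in $\H^*(BG;\Q)$ in positive degrees). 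Assembling these pieces yields $|\rho_g(M)|\le \tfrac{1}{N}\sum_\chi |\operatorname{sign}_\chi(\widetilde W)| \le C(d)\cdot |G|\cdot \Delta(M)$.
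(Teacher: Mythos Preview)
Your overall framework is correct and matches the paper's: realize $\rho_g(M)$ as $\tfrac{1}{N}\sign_G(g,\tilde W)$ for a bordism $W$ over $BG$ with $\partial W \supset N\cdot M$, then bound the $G$-signature by the middle Betti number of the $G$-cover $\tilde W$, hence by a dimensional constant times $|G|\cdot \Delta(W)/N$. The factor $|G|$ does enter exactly where you say, from passing to $\tilde W$.

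The gap is in your formulation of the ``efficient bounding step.'' You ask for $N=N(d)$ bounded independently of $M$ \emph{together with} $\Delta(W)\le C(d)\,|G|\,\Delta(M)$. Achieving both simultaneously is essentially a linear PL filling inequality over $BG$---this is in the direction of Gromov's open conjecture and is \emph{not} what the $G$-colored-polyhedra machinery delivers. The paper takes the opposite route: it lets $N$ be enormous (in fact $N=2k\cdot|G|^{k\,\Delta(M)}$, exponential in $\Delta(M)$) but arranges that $\Delta(W)\le C(d)\cdot N\cdot\Delta(M)$. Since only the ratio $\Delta(\tilde W)/N \le |G|\cdot\Delta(W)/N$ appears in the final estimate, this is enough. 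Concretely, the tower-of-coverings construction (Lemma~\ref{lem:n+1 chain}) produces, purely at the chain level in the Moore complex $\Z BG_*$, a $(2k)$-chain $u$ with $\partial u = N\cdot\alpha_*[M]$ modulo degenerate simplices and $|u|\le N\cdot\Delta(M)$; this chain is then converted into a geometric bordism by the simplex-by-simplex reduction procedures of Cha and Cha--Lim, each contributing only a dimensional constant. So the correct target is control of $\Delta(W)/N$, not separate control of $N$ and $\Delta(W)$.

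Two smaller divergences from the paper: it builds a bordism from $N$ copies of $M$ to a manifold mapping \emph{trivially} to $BG$ (which contributes nothing to $\rho$), rather than a genuine nullbordism; and it treats an arbitrary finite abelian $G$ directly rather than reducing to cyclic summands.
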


where $C(d)$ is a constant just depending on the dimension, $\lvert G \rvert$ is the order of $G$, and $\Delta (M)$ is the number of simplices in a triangulation of $M$. See the discussion in section~\ref{sec:polyhedron}, for the optimality of various aspects of this inequality.

\begin{theorem}\label{thm:CG}
If $M$ is an $(4k-1)$-dimensional PL manifold endowed with a faithful representation of $\pi_1$, then the Cheeger-Gromov invariant satisfies $\lvert \rho^{(2)}(M) \rvert \leq C'(d) \cdot \Delta (M)$.
\end{theorem}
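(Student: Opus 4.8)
The plan is to express $\rho^{(2)}(M)$ as the $L^2$-signature defect of a linear-size bounding manifold and then to control both the ordinary and the $L^2$ signature by the number of simplices. First I would use that the given representation is \emph{faithful}: by the induction (restriction) property of von Neumann $\rho$-invariants, $\rho^{(2)}(M,\phi)$ depends only on the image of $\phi$ and is unchanged when that image is embedded into a larger group. So we may take $\Gamma=\pi_1(M)$ and work with the identity representation, and, conversely, any injection $\pi_1(M)\hookrightarrow\Gamma'$ produced later leaves $\rho^{(2)}$ untouched. Moreover $\Omega^{\mathrm{SO}}_{4k-1}$ is a finite group of exponent $r=r(d)$, and $\rho^{(2)}$ and $\Delta$ are both additive over disjoint unions, so we may replace $M$ by the $r$-fold copy $rM$, which is an honest oriented PL boundary. (The tempting shortcut of approximating $\rho^{(2)}$ by Atiyah--Singer invariants of finite quotients and invoking Theorem~\ref{thm:AS} fails, because the bound there carries a factor $\lvert G\rvert$ that blows up along the tower.)

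The heart of the argument is the construction of a compact PL $4k$-manifold $W$ with $\partial W\supseteq rM$ such that: (i) the inclusion $\pi_1(rM)\hookrightarrow\pi_1(W)$ is injective; (ii) the remaining boundary component is a manifold $N$ whose $L^2$-$\rho$-invariant vanishes (in the cleanest case $N=\varnothing$, i.e.\ $W$ is a nullbordism, but it is more robust to allow $N$ to be, say, a hyperbolized copy of $M$ whose $\rho^{(2)}$ vanishes for symmetry reasons intrinsic to the hyperbolization); and (iii) $\Delta(W)\le C(d)\,\Delta(M)$. The tool for (i) and (iii) is \emph{relative strict hyperbolization} in the sense of Charney--Davis/Davis--Januszkiewicz: hyperbolizing a manifold pair rel its boundary is a simplex-local operation, so it multiplies the simplex count only by a dimension-dependent constant, it takes manifolds with a prescribed boundary to manifolds with that same boundary, and it makes every subcomplex---in particular the boundary---$\pi_1$-injective, while the \emph{relative} version does not destroy torsion in $\pi_1$ of the boundary (the output is aspherical only rel $\partial$). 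Concretely one starts from a cheap, possibly highly degenerate, cobordism model read off simplex-by-simplex from the triangulation of $M$ (assembled from finitely many fixed building blocks, hence of linear size), and repairs it by relative hyperbolization into the desired $W$.

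Once $W$ is in hand the estimate is immediate. Since $rM$ bounds $W$ over $B\pi_1(W)$ via its own inclusion, the Cheeger--Gromov/Chang--Weinberger bordism formula gives
\[
\rho^{(2)}\bigl(rM,\ \pi_1(rM)\hookrightarrow\pi_1(W)\bigr)\;=\;\sigma^{(2)}_{\pi_1(W)}(W)\;-\;\sigma(W),
\]
and the left side equals $r\,\rho^{(2)}(M)$ by the reduction above together with the vanishing of $\rho^{(2)}(N)$. Now $\lvert\sigma(W)\rvert\le b_{2k}(W;\Q)$ and $\lvert\sigma^{(2)}_{\pi_1(W)}(W)\rvert\le b^{(2)}_{2k}(W,\partial W)$, and each Betti number---ordinary or $L^2$---is at most the number of $2k$-cells in a CW (hence simplicial) structure on $W$, which is at most $\Delta(W)\le C(d)\,\Delta(M)$. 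Dividing by $r$ yields $\lvert\rho^{(2)}(M)\rvert\le C'(d)\,\Delta(M)$.

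The main obstacle is the construction of Step two, and specifically keeping it of \emph{linear} complexity: a genuine small nullbordism of $M$ is not available to us---already for $3$-manifolds the known filling bounds are exponential, and a linear one would settle an open problem of Gromov's flavor mentioned in the introduction---so the point is precisely the abstract's ``cobordism with linear complexity whose boundary is $\pi_1$-injectively embedded.'' One does not fill $M$ but bords it, through a linear-size hyperbolized cobordism, to a manifold whose $L^2$-$\rho$-invariant is forced to vanish, leaving only the linearly bounded signature defect of the cobordism. Arranging this---simultaneously a manifold, of linear size, with $\pi_1$-injective boundary, and with a controlled far end, and verifying the far end contributes nothing---is where the real work lies; the induction property, the bordism formula, and the Betti-number bounds are then formal.
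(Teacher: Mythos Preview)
Your architecture is the paper's: relative hyperbolization to produce a linear-size cobordism with $\pi_1$-injective boundary, then bound the $L^2$-signature defect by the number of middle-dimensional cells. Two points, however, separate your sketch from a proof.

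First, the passage to $rM$ via finiteness of $\Omega^{\mathrm{SO}}_{4k-1}$ is an unnecessary detour. The paper never seeks a nullbordism; it takes as its ``cheap cobordism'' the trivial one $M\times[0,1]$ and applies relative hyperbolization to the pair $\bigl(M\times[0,1],\,M\times\{0\}\amalg M\times\{1\}\bigr)$, equivalently hyperbolizes the suspension $SM$ and deletes open stars of the two cone points. This $W$ is already a manifold cobordism from $M$ to $M$ with $\Delta(W)\le C(d)\,\Delta(M)$, and both boundary copies are $\pi_1$-injective in~$W$.

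Second---and this is the genuine gap you yourself flag---your suggested mechanism for the far end is wrong. There is no ``symmetry reason intrinsic to the hyperbolization'' that makes $\rho^{(2)}$ of a hyperbolized $M$ vanish. The paper's device is different and purely group-theoretic: one \emph{defines}
\[
\Gamma \;:=\; \pi_1\bigl(W\cup C(M\times\{1\})\bigr),
\]
i.e.\ the fundamental group of $W$ with one end coned off. By construction $\pi_1(M\times\{1\})\to\Gamma$ is the trivial homomorphism, so $\rho^{(2)}_\Gamma(M\times\{1\})=0$ automatically. The content of the relative hyperbolization theorem of Davis--Januszkiewicz--Weinberger is that the \emph{other} end $M\times\{0\}$ remains $\pi_1$-injective even after this coning, so by the induction property you invoked, $\rho^{(2)}_\Gamma(M\times\{0\})=\rho^{(2)}(M)$. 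The signature-defect formula then reads $\rho^{(2)}(M)=\sigma^{(2)}_\Gamma(W)-\sigma(W)$ directly, and your Betti-number bound finishes exactly as written. The trick, in short, is not to find a far end whose $\rho^{(2)}$ happens to vanish, but to choose the target group $\Gamma$ so as to \emph{force} one end to be trivial while the hyperbolization keeps the other end injective.
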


Since smooth manifolds with bounded geometry metrics have PL triangulations whose number of simplices is proportional to the volume, see~\cite{ChMS84}, the above inequality does imply the Cheeger-Gromov inequality. Note that the factor $\lvert G \rvert$, which is necessary in Theorem~\ref{thm:AS}, is not present in Theorem~\ref{thm:CG}. Either theorem can be applied to prove the following theorem.

\begin{theorem}\label{cor:CG}
The number of simplices of the standard lens space $L_{N}(1,1,\cdots,1)$ of dimension $2d-1$ (with fundamental group $\Z_N$) grows (in $N$) like $N^{d-1}$, i.e. the number of simplices is bounded above and below by dimensional constants times $N^{d-1}$.
\end{theorem}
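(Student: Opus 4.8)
The plan is to prove the two inequalities by entirely separate arguments: an explicit triangulation for the upper bound, and Theorem~\ref{thm:AS}, applied to one carefully chosen group element, for the lower bound.

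For the upper bound I would realize $L_N(1,\dots,1)$ of dimension $2d-1$ as an iterated join. Triangulate $S^1$ as the boundary $C_N$ of a convex $N$-gon (so $N$ vertices and $N$ edges), with $\Z_N$ acting by a one-step rotation, and set $K := C_N * \dots * C_N$ ($d$ join factors); this is a simplicial complex PL homeomorphic to $S^1 * \dots * S^1 = S^{2d-1}$, carrying the diagonal $\Z_N$-action whose quotient is exactly the standard lens space $L_N(1,\dots,1)$. A simplex of a join is a tuple of faces of the $C_N$'s, so $K$ has precisely $(2N+1)^d - 1$ simplices, and the diagonal rotation moves every one of them; hence $\Z_N$ acts freely on the simplices of $K$. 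Since the naive quotient need not be a simplicial complex, I would pass to the first barycentric subdivision $\operatorname{sd}(K)$: the action remains free on simplices, and no simplex of $\operatorname{sd}(K)$ has two vertices in a common $\Z_N$-orbit (such vertices would be faces of $K$ of equal dimension lying in a single chain, hence equal). Therefore $\operatorname{sd}(K)/\Z_N$ is a simplicial complex PL homeomorphic to $L_N(1,\dots,1)$ with $\tfrac1N\Delta(\operatorname{sd}(K)) \le \tfrac{C(d)}{N}\bigl((2N+1)^d-1\bigr) \le C'(d)N^{d-1}$ simplices, using that barycentric subdivision of a complex of dimension $2d-1$ multiplies the number of simplices by at most a dimensional constant. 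This gives $\Delta\bigl(L_N(1,\dots,1)\bigr) \le C'(d)N^{d-1}$.

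For the lower bound I would take $G = \Z_N$, $\alpha = \id\colon \pi_1(L_N(1,\dots,1)) = \Z_N \to \Z_N$, and the group element $g = \zeta := e^{2\pi i/N}$. The $\rho$-invariant of this lens space is classical: applying the Atiyah--Singer $G$-signature theorem to $W = D^{2d}$, on which $\Z_N$ acts diagonally with a single fixed point whose rotation eigenvalues are all $\zeta$, one gets $\rho_g(L_N(1,\dots,1)) = \bigl(\tfrac{\zeta+1}{\zeta-1}\bigr)^d$ up to sign and normalization (see~\cite{AB},~\cite{Hir68}). Since $\bigl|\tfrac{\zeta+1}{\zeta-1}\bigr| = |\cot(\pi/N)| \ge c\,N$ for all large $N$, this gives $\bigl|\rho_g(L_N(1,\dots,1))\bigr| \ge c(d)N^d$. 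Plugging this into Theorem~\ref{thm:AS}, whose right-hand side is $C(d)\cdot|G|\cdot\Delta = C(d)\,N\,\Delta$, yields
\[
c(d)N^d \;\le\; \bigl|\rho_g(L_N(1,\dots,1))\bigr| \;\le\; C(d)\,N\,\Delta\bigl(L_N(1,\dots,1)\bigr),
\]
so $\Delta(L_N(1,\dots,1)) \ge \tfrac{c(d)}{C(d)}N^{d-1}$; note that the factor $|G| = N$ in Theorem~\ref{thm:AS} is exactly what makes this estimate come out sharp, since $|\rho_g|$ is itself a factor of $N$ larger than the answer. When $d$ is even one can instead use Theorem~\ref{thm:CG}: the Cheeger--Gromov invariant $\rho^{(2)}(L_N(1,\dots,1))$ is, up to sign and normalization, of the form $\tfrac1N\sum_{k=1}^{N-1}\cot^d(\pi k/N)$, which is $\asymp N^{d-1}$, and this route avoids the factor $|G|$ altogether. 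Either way, combining with the upper bound proves the theorem.

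I expect the only real work to be book-keeping rather than any new idea. On the upper-bound side the fiddly points are verifying that the quotient of the subdivided join is honestly a simplicial complex and that the crude simplex count survives barycentric subdivision with only a dimensional constant lost. On the lower-bound side the delicate point is aligning the normalization of $\rho_g$ used in Theorem~\ref{thm:AS} with the output of the $G$-signature computation for a diagonal lens space, and confirming that the fixed-point contribution $\bigl(\tfrac{\zeta+1}{\zeta-1}\bigr)^d$ genuinely grows like $N^d$ — in particular that it is nonzero, with no cancellation against lower-order correction terms. Since this last input is classical (\cite{AB}, \cite{Hir68}, \cite{APS}), I anticipate no serious obstacle.
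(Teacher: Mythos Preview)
Your proposal is correct and follows the same overall strategy as the paper: the join-of-$N$-gons construction for the upper bound, and the cotangent formula for the lens-space $\rho$-invariant together with Theorem~\ref{thm:AS} for the lower bound. The one substantive difference is in how the lower bound treats the parity of $d$. The paper uses the regular-representation value $\rho_\alpha = \sum_{k=1}^{N-1}\cot^d(\pi k/N)$, which it bounds below by $(N/\pi)^d$ only when $d$ is even (so that all summands are nonnegative); for odd $d$ it then argues indirectly, deducing the lower bound for $L^{2d-1}$ from the already-established lower bound for $L^{2(d+1)-1}$ via the join construction. Your choice of a single generator $g=\zeta$ and the fixed-point value $\rho_g=\bigl(\tfrac{\zeta+1}{\zeta-1}\bigr)^d=(-i)^d\cot^d(\pi/N)$ gives $|\rho_g|\asymp N^d$ uniformly in $d$, so one application of Theorem~\ref{thm:AS} covers both parities at once. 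This is cleaner; the paper's detour through the join for odd $d$ is unnecessary once one is willing to evaluate at a single group element rather than sum.
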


We remark that the theorems remain true with their given proofs for arbitrary triangulations of manifolds which are not necessarily restricted to PL triangulations. We note that the proof for Theorem~\ref{cor:CG} when the dimension is $1$ mod $4$ is slightly indirect as in that case there is no Cheeger-Gromov invariant.

In dimension $3$, the results stated in Theorem~\ref{cor:CG} and Theorem~\ref{thm:CG} were originally established by Cha~\cite{Cha16}. The first author, in a subsequent work~\cite{Lim}, improved the estimate for $C_3'$, resulting in a more refined bound. This paper is the result of rethinking some of the algebra in those papers from a more geometrical perspective, leading to an improved estimate of $12$ for $C_3'$. (However, in high dimensions, the constants are much reasonable.)

In~\cite{CL}, an extension of Theorem~\ref{thm:CG} to other $L^2$ $\rho$-invariants will be given by a mixture of the algebraic and geometric ideas in all of these papers. The underlying philosophical insight is that approaching the geometric arguments from an algebraic perspective enables the establishment of functoriality, whereas a purely geometric approach would require the inclusion of a complexity measurement for the homomorphism. Theorem~\ref{thm:AS}, in the form given here, actually relies on methods and results of this work. The technique we use for Theorem~\ref{thm:CG} would give some constant for each finite $G$ but not linearity for $\lvert G \rvert$. We note that the applications below do not depend on this linearity.

The following is an application of our techniques to trying to understand the complexity of manifolds that are produced by surgery theory. The classification of manifolds homotopy equivalent to a lens space is one of the high points of Wall's book~\cite{Wall99} - we now estimate how many simplices are needed to construct those manifolds.

\begin{theorem}\label{cor:AS}
The number of h-cobordism classes of manifolds simple homotopy equivalent to a lens space $L_N^d$ with at most $V$ simplices is bounded above and below by constant multiples of $V^{[(N-1)/2] + \delta (N,d)}$, with constants depending on $d$, where $d$ is odd, and $\delta (N,d) = 0$ unless $N$ is even and $d$ is $3$ mod $4$, and in that case it is $1$.
\end{theorem}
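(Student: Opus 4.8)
\emph{Overview and structure-set input.} The plan is to identify the $h$-cobordism classes of manifolds simple homotopy equivalent to $L=L_N^d$ with a controlled quotient of the simple structure set $\mathcal S^s(L)$, to extract the upper bound from Theorem~\ref{thm:AS}, and to match it from below by realizing elements of the structure set by iterated surgeries of controlled simplicial cost. We describe the argument for $d\geq 5$; the low-dimensional cases, in particular $d=3$ which is due to Cha~\cite{Cha16}, are handled separately and are not the point. By the classical surgery classification of manifolds in a fixed simple homotopy type (see~\cite{Wall99}), for $d\geq 5$ each manifold $M\simeq_s L$ has a well-defined multiset of Atiyah--Singer invariants $\bigl(\rho_g(M)\bigr)_{1\neq g\in\Z_N}$, and this multiset, together with the (finite) normal invariant of $M$, determines $M$ up to $s$-cobordism, hence up to $h$-cobordism, up to an ambiguity bounded in terms of $N$ and $d$ (coming from the finite group $[L,G/\mathrm{PL}]$, from the finite automorphism group $\mathrm{Aut}(\Z_N)$, and from the difference between $s$- and $h$-cobordism). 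Moreover $\bigl(\rho_g(M)\bigr)_g$ lies in $\bigl(\rho_g(L)\bigr)_g+\Lambda$ for a fixed lattice $\Lambda\subset\R^{N-1}$ of rank $r:=[(N-1)/2]+\delta(N,d)$: the reality relation $\rho_{g^{-1}}(M)=(-1)^{(d+1)/2}\rho_g(M)$ pairs each nontrivial character with its inverse, and for $N$ even the self-conjugate (order-two) character contributes a nonzero invariant precisely when $(d+1)/2$ is even, i.e.\ $d\equiv 3\pmod 4$; this is the origin of $\delta(N,d)$. All of this is classical.

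\emph{Upper bound.} If $M\simeq_s L$ has $\Delta(M)\leq V$, then Theorem~\ref{thm:AS}, applied to the isomorphism $\pi_1(M)\to\Z_N$, gives $\lvert\rho_g(M)\rvert\leq C(d)\,N\,V$ for every nontrivial $g$, so $\bigl(\rho_g(M)\bigr)_g$ lies in the intersection of $\bigl(\rho_g(L)\bigr)_g+\Lambda$ with a box of side $O_{N,d}(V)$. That intersection has $O_{N,d}(V^{\,r})$ points, and by the previous paragraph each accounts for only $O_{N,d}(1)$ $h$-cobordism classes; hence there are at most $O_{N,d}(V^{\,r})$ $h$-cobordism classes with at most $V$ simplices.

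\emph{Lower bound.} The ``$G$-signature defect'' homomorphism $L_{d+1}(\Z[\Z_N])\to\R^{N-1}$ (sending a surgery obstruction to the $\rho$-jump it produces by Wall realization) has image a finite-index sublattice of $\Lambda$, so we may choose $\lambda_1,\dots,\lambda_r\in L_{d+1}(\Z[\Z_N])$ with linearly independent images $v_1,\dots,v_r$. Each $\lambda_i$ is Wall-realized by a normal cobordism, rel boundary, from any closed manifold with fundamental group $\Z_N$, built from a number of handles bounded in terms of $N$ and $d$; after triangulating the handles, such a cobordism changes the number of simplices by a bounded amount $c_i=c_i(N,d)$ and changes the $\rho$-vector by $v_i$. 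Concatenating $m_1$ copies of such a cobordism for $\lambda_1$, then $m_2$ copies for $\lambda_2$, and so on, yields $M_{(m_1,\dots,m_r)}\simeq_s L$ with $\Delta\bigl(M_{(m_i)}\bigr)\leq\Delta(L)+\sum_i c_i m_i$ and $\rho$-vector $\bigl(\rho_g(L)\bigr)_g+\sum_i m_i v_i$. As $(m_1,\dots,m_r)$ ranges over the $\gtrsim_{N,d}V^{\,r}$ nonnegative integer points with $\sum_i c_i m_i\leq V-\Delta(L)$, the vectors $\sum_i m_i v_i$ are pairwise distinct; since the $\rho$-multiset up to the $\mathrm{Aut}(\Z_N)$-action is an $h$-cobordism invariant, at least $\gtrsim_{N,d}V^{\,r}$ of the $M_{(m_i)}$ are pairwise non-$h$-cobordant. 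Together with the upper bound this proves the theorem.

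\emph{Main obstacle.} The essential point, and the only nontrivial one, is the simplicial-complexity control in the lower bound: the realizing cobordisms must be triangulated with a number of simplices linear in $\sum_i m_i$. For the finite group $\Z_N$ this is obtained by stacking copies of fixed ``unit'' cobordisms and bounding the cost of each step separately, using that one surgery, or one connected sum with a fixed triangulated manifold, increases the number of simplices by $O(d)$; no hyperbolization is needed here, in contrast to the infinite-group statements of this paper. The one delicate point is the self-conjugate character when $N$ is even and $d\equiv 3\pmod 4$, which must be detected by a genuine signature obstruction in $L_{d+1}(\Z[\Z_N])$ rather than by a $2$-torsion class, so the reality analysis of the second step must be fed back into the choice of the $\lambda_i$.
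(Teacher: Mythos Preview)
Your overall strategy matches the paper's: the upper bound comes from Theorem~\ref{thm:AS} via a box--lattice count on the $r=[(N-1)/2]+\delta(N,d)$ independent $\rho$-invariants, and the lower bound from Wall realization of a basis of the free part of $L_{d+1}(\Z[\Z_N])$. Your discussion of why $r$ is the correct exponent (pairing $g\leftrightarrow g^{-1}$, the self-conjugate character for $N$ even contributing only when $d\equiv 3\pmod 4$) is also the paper's.

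The one place where your argument is looser than the paper's is the simplicial control in the lower bound. You assert that ``concatenating $m_i$ copies'' of Wall-realization cobordisms yields $M_{(m_1,\dots,m_r)}$ with $\Delta(M_{(m_i)})\le\Delta(L)+\sum_i c_i m_i$, on the grounds that each realization ``changes the number of simplices by a bounded amount''. But the Wall realization of $\lambda_i$ on $M_j$ attaches handles to $M_j\times[0,1]$, and the attaching spheres have no a~priori reason to sit in a bounded subcomplex of the \emph{evolving} triangulation of $M_j$; your one-line ``one surgery increases the number of simplices by $O(d)$'' elides this. The paper resolves it with the trick from~\cite{Wei82}: fix once and for all a codimension-one PL submanifold $X\subset L$ with $\pi_1(X)\xrightarrow{\cong}\pi_1(L)$, factor the Wall action through $L_{d+1}(\Z[\Z_N])\to S(X\times[0,1]\ \textup{rel}\ \partial)\to S(L)$, and realize each generator as a \emph{fixed} triangulated block $Y_i$ with $\partial Y_i=X\sqcup X$. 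Then one cuts $L$ along $X$ and inserts the stack $Y_{i_1}\cup_X Y_{i_2}\cup_X\cdots$, so the simplex count is manifestly $\Delta(L)+O(\sum m_i)$ with constants depending only on the fixed blocks. (When $\dim X=4$ the paper notes one may need extra $S^2\times S^2$ summands, citing~\cite{CS71}.) This is precisely the mechanism that makes your ``bounded $c_i$'' rigorous; once inserted, the two arguments coincide.

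One small correction: your attribution of the low-dimensional case to Cha is off. Cha's result in~\cite{Cha16} is the $3$-dimensional instance of Theorem~\ref{thm:CG} (the Cheeger--Gromov bound), not of the present theorem.
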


This is based on the connection between $\rho$-invariants and surgery theory established in~\cite{Wall99}. Specifically, we balance our bound on the $\rho$-invariants of these manifolds against a construction given in~\cite{Wei82}. This technique allows us to construct that many homotopy lens spaces with $V$ simplices. The key implication of our findings is that the number of simplices required for an element of the homotopy structure set of a lens space~\cite{Wall99} is approximately equivalent to its distance from the origin within that set, when viewed as an abelian group.

Counting the manifolds seems more natural than counting $h$-cobordism classes of these manifolds. This would require estimates on Reidemeister torsions, which are analogous to the above inequalities for $\rho$-invariants. The idea of~\cite{Sou99} suggests that bounding the number of simplices suffices for this project. However it is not known how to do this in general. Using the result of~\cite{ChMS84} and these ideas, we have:

\begin{theorem}\label{cor:bddgeo}
The number of homotopy lens spaces with bounded geometry with fundamental group $\Z/N$ in dimension $d$ and volume $V$ can be bounded above and below by constants times $V^{N-d(N)+\delta(N,d)}$.
\end{theorem}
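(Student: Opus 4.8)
The plan is to upgrade the proof of Theorem~\ref{cor:AS} from a count of $h$-cobordism classes to a count of homeomorphism classes of manifolds, using the bounded-geometry hypothesis to supply the two ingredients that are missing in the purely PL setting: a two-sided comparison between Riemannian volume and the number of simplices, and a linear bound on Reidemeister torsion. Recall that within the homotopy type of a lens space $L^d_N$ a manifold $M$ is pinned down (up to finitely many choices) by its normal invariant, its Atiyah--Singer $\rho$-data $\rho(M) = (\rho_g(M))_{g \neq 1}$, and its Reidemeister/Whitehead torsion $\tau(M)$: the first two pieces control the $h$-cobordism class counted in Theorem~\ref{cor:AS}, while $\tau(M)$ --- a topological invariant --- refines this to the homeomorphism class (cf.~\cite{Wall99}, and the classical torsion classification of lens spaces). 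Thus, up to the finitely many normal invariants, the quantity to estimate is the number of pairs $(\rho(M),\tau(M))$ realized by homotopy lens spaces admitting a bounded-geometry metric of volume at most $V$; here $\rho(M)$ ranges in the $\rho$-invariant lattice and $\tau(M)$ in the lattice governing torsion of lens spaces, and the combined rank of these two lattices is exactly the exponent $N - d(N) + \delta(N,d)$ of the statement, the $\delta$-term being the extra signature-type invariant that appears when $N$ is even and $d \equiv 3 \bmod 4$, as in Theorem~\ref{cor:AS}.

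For the upper bound, let $M$ be a homotopy lens space carrying a Riemannian metric of bounded geometry with $\vol(M) \le V$. By~\cite{ChMS84} it admits a PL triangulation with $\Delta(M) \le C_0(d,\inj,K)\cdot V$ simplices. Applying Theorem~\ref{thm:AS} to every nontrivial $g \in \Z/N$ bounds all coordinates of $\rho(M)$ by $C_1(d,N)\cdot V$. For the torsion one invokes the Cheeger--M\"uller identification of Reidemeister torsion with analytic torsion together with a linear bound on the analytic torsion of a bounded-geometry manifold in terms of its volume --- this is the mechanism suggested by~\cite{Sou99} --- so that $\tau(M)$, measured by its logarithmic height in the ambient unit group, is likewise $O_{d,\inj,K}(V)$. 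The number of lattice points all of whose coordinates are $O(V)$ in a space of dimension $N - d(N) + \delta(N,d)$ is $O\!\left(V^{\,N-d(N)+\delta(N,d)}\right)$, and multiplying by the finitely many normal invariants gives the upper bound with constant depending only on $d$ and the geometry bounds.

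For the lower bound we must exhibit that many pairwise non-homeomorphic homotopy lens spaces, each admitting a bounded-geometry metric of volume at most $V$. Starting from a fixed lens space, the construction of~\cite{Wei82} used in the proof of Theorem~\ref{cor:AS} realizes prescribed values of the surgery/$\rho$-data by a uniformly bounded amount of \emph{local} surgery per unit change; combined with the standard local modifications that shift Reidemeister torsion by a controlled amount, this lets us hit an arbitrary lattice point of a box of side $c(d,N)\cdot V$ in the combined $(\rho,\tau)$-space while adding only $O(1)$ simplices per step and hence $O(V)$ simplices in all. Because every modification is local with bounded combinatorial complexity, the resulting PL manifolds smooth to Riemannian manifolds of bounded geometry and volume $O(V)$, and a bounded rescaling pushes the volume below $V$; the manifolds so obtained are mutually non-homeomorphic precisely because their $(\rho,\tau)$-values differ. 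This produces $\Omega\!\left(V^{\,N-d(N)+\delta(N,d)}\right)$ such manifolds.

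I expect the main obstacle to be the Reidemeister-torsion estimate. The $\rho$-invariant half of the argument is handed to us by Theorem~\ref{thm:AS}, but the linear bound on torsion in terms of volume is exactly the step where the bounded-geometry hypothesis is indispensable and for which --- as the discussion preceding the theorem notes --- no purely PL statement is available; making the~\cite{Sou99}-style argument precise in this dimension range, with explicit control of the constant in terms of $(d,\inj,K)$, is the crux. A secondary difficulty lies on the lower-bound side: one must verify that the~\cite{Wei82} constructions can genuinely be carried out with uniformly bounded local complexity, so that the many homotopy lens spaces really do admit bounded-geometry metrics of volume $O(V)$ and not merely triangulations with $O(V)$ simplices.
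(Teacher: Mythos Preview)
Your proposal is correct and follows essentially the same route as the paper: upper bounds from Theorem~\ref{thm:AS} together with a bounded-geometry torsion estimate, and lower bounds by independently realizing the $\rho$- and torsion-lattices via local cut-and-paste constructions of the~\cite{Wei82} type. The one mild difference is that where you invoke Cheeger--M\"uller to pass through analytic torsion, the paper bounds the combinatorial torsion directly via $\exp\!\big(\sum (-1)^{q+1} q \log\det \Delta_q\big)$ for the combinatorial Laplacians on a bounded-geometry triangulation (this is their Proposition~\ref{prop:diff}); for the torsion realization it is slightly more specific than your ``standard local modifications,'' gluing in $h$-cobordisms realizing $2\,Wh(\Z/N)$ along the boundary of a regular neighborhood of the $2$-skeleton.
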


This theorem differs from the previous in that we consider homotopy lens spaces up to isomorphism -- which necessitates an estimate on the size of the Reidemeister torsion (or equivalently, the torsion of an h-cobordism between two such manifolds).  We also need to modify our construction of examples (to get the lower bound); what we do is an algebraic K-theoretic analogue of the surgery theoretic construction used for Theorem~\ref{cor:AS}.

Finally, in Section~\ref{sec:density}, we will prove the following variation on~\cite{CW}. It is essentially independent of the rest of the paper.

\begin{theorem}\label{thm:CW}
Suppose $M$ is a closed oriented manifold of of dimension $4k+3$, where $k>0$, and $\pi_{1}(M)$ has finite subgroups of arbitrary large order. Then the values of $\rho^{(2)}(M’)$ as $M'$ varies over manifolds homotopy equivalent to $M$ is a dense subset of $\R$.  This implies that the group $S(M)$ of such manifolds (studied by surgery theory) is not finitely generated.
\end{theorem}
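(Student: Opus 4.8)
\section*{Proof proposal for Theorem~\ref{thm:CW}}

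The plan is to run the standard surgery machine that produces exotic manifolds detected by $\rho$-invariants, as in~\cite{CW}, but to push it so as to extract from $S(M)$ a \emph{subgroup} of $\rho^{(2)}$-values with unbounded denominators rather than merely infinitely many values. Write $\pi=\pi_1(M)$ and $d=\dim M=4k-1$. Two structural facts drive everything. First, the surgery exact sequence makes $S(M)$ an abelian group carrying an action of $L_{4k}(\Z\pi)$: given $x\in L_{4k}(\Z\pi)$, Wall realization on $M\times I$ produces a homotopy equivalence $f_x\colon M_x\to M$ representing the class $\partial x\in S(M)$. Since $\rho^{(2)}$ is an $h$-cobordism invariant and is additive, $M'\mapsto\rho^{(2)}(M')-\rho^{(2)}(M)$ descends to a homomorphism $\bar\rho\colon S(M)\to\R$. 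Second, by the APS/Cheeger--Gromov signature-defect computation (see~\cite{CW} and the references there),
\[
\bar\rho(\partial x)=\pm\,\sigma^{(2)}(x),\qquad x\in L_{4k}(\Z\pi),
\]
where $\sigma^{(2)}$ is the $L^2$-signature homomorphism, the composite $L_{4k}(\Z\pi)\to L_{4k}(\mathcal N\pi)\xrightarrow{\ \mathrm{sign}\ }\R$ through the group von Neumann algebra.

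Consequently the set $\{\rho^{(2)}(M')\}$, over $M'$ homotopy equivalent to $M$, contains the coset $\rho^{(2)}(M)+\Image(\sigma^{(2)})$, and the homomorphism $\bar\rho\colon S(M)\to\R$ has image containing $\Image(\sigma^{(2)})$. Thus both assertions follow at once from the single claim that $\Image(\sigma^{(2)})\subseteq\R$ is not finitely generated: note $\Image(\sigma^{(2)})\subseteq\Q$, since $L^2$-signatures of classes that are rationally induced from finite subgroups have denominators bounded by the orders of those subgroups, and a non-finitely-generated subgroup of $\Q$ necessarily has elements of arbitrarily small positive absolute value, hence is dense in $\R$. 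So non-finite-generation of $\Image(\sigma^{(2)})$ gives density of the $\rho^{(2)}$-values, and non-finite-generation of $S(M)$, simultaneously.

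Now I would bring in the finite subgroups. Fix a finite subgroup $H\le\pi$ with $n=|H|$. Induction along $H\hookrightarrow\pi$ gives $\mathrm{ind}_H^\pi\colon L_{4k}(\Z H)\to L_{4k}(\Z\pi)$, and because the trace of $\mathcal N\pi$ restricts on $\C H$ to $\tfrac1n$ times the character of the regular representation,
\[
\sigma^{(2)}\bigl(\mathrm{ind}_H^\pi x\bigr)=\tfrac1n\sum_{V}(\dim V)\,\sigma_V(x)=\tfrac1n\,\mathrm{sign}_{\Z}(x),
\]
the sum over irreducibles $V$ of $H$, with $\sigma_V(x)$ the multisignature component and the right-hand side the ordinary signature of $x$ regarded as a form over $\Z$. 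Hence it suffices to produce, for finite subgroups $H\le\pi$ of arbitrarily large order, classes $x_H\in L_{4k}(\Z H)$ with $\mathrm{sign}_\Z(x_H)$ nonzero of absolute value $o(|H|)$; equivalently, $\sigma^{(2)}(\mathrm{ind}_H^\pi x_H)$ is a nonzero rational whose reduced denominator is unbounded as $|H|\to\infty$. Feeding these into the previous paragraph exhibits $\Image(\sigma^{(2)})$ as a subgroup of $\Q$ with unbounded denominators, hence not finitely generated.

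The construction of such $x_H$ is the heart of the matter and is where I expect the real work to lie. Rationally the multisignature map $L_{4k}(\Z H)\otimes\Q\to L_{4k}(\Q H)\otimes\Q=\bigoplus_V\Q$ is onto, so one can prescribe the $\sigma_V$ freely over $\Q$; the issue is to realize an integral class whose components are ``spread'' enough that $\sum_V(\dim V)\sigma_V$ is small but nonzero, i.e.\ to control the denominators introduced in clearing to $L_{4k}(\Z H)$. For $H=\Z/n$ this is classical and explicit: the relevant classes are detected by the Atiyah--Bott/Atiyah--Singer $\rho$-invariants of lens spaces, which are cotangent sums known to accumulate at every real number as $n$ grows, so one simply reads off $x_{\Z/n}$ with $\sigma^{(2)}(\mathrm{ind}\,x_{\Z/n})=m/n$, $\gcd(m,n)$ bounded. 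For a general finite $H$ of large order one reduces the computation of $\mathrm{sign}_\Z$ on $L_{4k}(\Z H)$ to the structure of the multisignature lattice via the localization/arithmetic-square description of $L_{4k}(\Z H)$ — where the only relations among the $\sigma_V$ beyond divisibility by $8$ are $G$-signature congruences modulo divisors of $|H|$ — and checks that the image of $\tfrac1{|H|}\sum_V(\dim V)\sigma_V$ in $\Q$ genuinely has unbounded denominator along the chosen subgroups. This last bookkeeping is essentially the only place the \emph{size} of $H$ enters, and is the refinement over~\cite{CW}, which needed only one nonzero value; it is the main obstacle, while everything else is formal.
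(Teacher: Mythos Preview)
Your framework is the paper's: surgery sequence, Wall realization acting on $S(M)$, and the homomorphism $\bar\rho\colon S(M)\to\R$. One slip: the correct identity is $\bar\rho(\partial x)=\sigma^{(2)}(x)-\sigma(x)$, not $\pm\sigma^{(2)}(x)$; the $\rho^{(2)}$-defect across a normal cobordism is the $L^2$-signature \emph{minus} the ordinary signature of the surgery obstruction (this is precisely why the composite from $H_*(M;\mathbf{L})$ vanishes, by Atiyah). Since $\sigma(x)\in\Z$ this does not affect denominators, so your reduction to ``the image in $\Q$ has unbounded denominators'' survives, and your observation that a non-cyclic subgroup of $\Q$ is automatically dense is correct and is how the paper finishes as well.

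The genuine gap is the step you yourself flag as ``the heart of the matter'': producing $x_H\in L_{4k}(\Z H)$ with $\tfrac{1}{|H|}\,\mathrm{sign}_\Z(x_H)$ of large reduced denominator. You offer lens-space computations for cyclic $H$ and leave general $H$ to an unfinished multisignature/arithmetic-square analysis. Two problems. First, cyclic $H$ alone does not suffice: finite subgroups of arbitrarily large order need not contain large cyclic subgroups---the paper itself points to finitely presented groups containing the lamplighter, where every torsion element has order $2$ yet $(\Z/2)^n$ sits inside for all $n$. Second, and more to the point, the paper shows this step is a one-liner, not an obstacle. Wall proved that the transfer $L_0(\Z H)\to L_0(\Z)\cong 8\Z$ is surjective for \emph{every} finite $H$. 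Take $y$ with transfer $8$; since the composite $L_0(\Z)\to L_0(\Z H)\xrightarrow{\mathrm{tr}}L_0(\Z)$ is multiplication by $|H|$, one corrects by an induced class to arrange $\sigma(y)=0$, and then the signature defect of $\mathrm{ind}_H^\pi y$ is $8/|H|$ up to an integer. No lens spaces, no localization sequences, no case division on $H$.
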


The condition on the fundamental group $\pi_1$ in this theorem arises in all known examples where $L^2$ Betti numbers do not form a discrete subset of $\R$. It seems that if the Farrell-Jones conjecture is true then this condition is necessary as well for the density of values of $\rho^{(2)}$. We remark that it is conjectured that all the numbers in this set of real numbers differ from one another by rational numbers~\cite{Chang04}. If this conjecture is true, it opens up the possibility of making estimates using p-adic heights of $\rho$-invariants.

We also remark that the conclusion of this theorem would follow from conjectures in~\cite{WY15}. In fact, it extends beyond the conjectures for certain groups, such as the lamplighter group, or more generally, finitely presented groups containing it, which consist solely of elements of order $2$ and also contain $\Z_2^n$ for all $n$.

We close by mentioning a drawback of the purely geometric nature of our methods. We are unable to prove the natural analog of Theorem~\ref{thm:AS} for Atiyah-Patodi-Singer invariants of odd dimensional PL manifolds with representations of their fundamental group. It seems unbelievable that this could not hold for Atiyah-Patodi-Singer invariants. But our methods that make essential use of bordism does not gracefully extend to the bordism of manifolds with flat bundles. This is due to the fact that the relevant bordism group is an infinite dimensional vector space rationally. Here, as in~\cite{CW},~\cite{Cha16}, and~\cite{Lim}, we avoid this in the case of $\rho^{(2)}$, using the functoriality of this invariant with respect to inclusions. Unfortunately, we do not yet have an analogous tool in the Atiyah-Patodi-Singer setting.

The paper is structured as follows: Section~\ref{sec:rho} provides a brief overview of the definitions of the Wall $\rho$-invariant and the Cheeger-Gromov $L^2$ $\rho$-invariants. In Section~\ref{sec:polyhedron}, we discuss the geometric concepts of $G$-colored polyhedra and towers of their coverings. We then employ these concepts to prove Theorem~\ref{thm:AS}. Section~\ref{sec:hyperbolization} is devoted to a quantitative study of relative hyperbolization, which is essential in proving Theorem~\ref{thm:CG}. In Section~\ref{sec:complexity}, we apply our results to the complexity and homotopy structure sets of homotopy lens spaces, providing proofs for Theorem~\ref{cor:CG} and Theorem~\ref{cor:AS}. Finally, in Section~\ref{sec:density}, we prove Theorem~\ref{thm:CW}, which is independent of the rest of the paper.

We would like to thank Jae Choon Cha and Fedya Manin for very stimulating conversations on a number of topics around the contents of this paper.

\noindent {\bf Note:} {\em In this paper, we assume that manifolds are closed, oriented, and triangulated unless otherwise noted.}

\section{Wall $\rho$-invariant and Cheeger-Gromov $L^2$ $\rho$-invariant}\label{sec:rho}

In this section, we provide a brief review of the definitions of the Wall $\rho$-invariant and the Cheeger-Gromov $L^2$ $\rho$-invariants.

The Atiyah-Singer’s classical $G$-signature~\cite[Section 6]{AS68} is a signature type invariant of odd-dimensional manifolds with finite fundamental group. This invariant is defined in the representation rings modulo the regular representations with rational coefficients. For a $2d$-manifold on which $G$ acts, the bilinear form on the middle dimension cohomology of the manifold is $G$-invariant. For even $d$, the operator which recognizes the bilinear form gives the positive eigenspace and negative eigenspace. So one have two real representations of $\rho^{+}$ and $\rho^{-}$ which are elements of the real representation ring $RO(G)$. The $G$-signature is defined as $\rho^{+} - \rho^{-}$. For odd $d$, we can change the skew symmetric inner product form into a symmetric Hermitian one by multiplying by $i$, and then using the construction given for $d$ even. So one can obtain a complex representation $\rho$ of $G$ and the $G$-signature is defined by $\rho-\rho^*$ in the complex representation ring $R(G)$. Recall $R(G)$ is the ring formally spanned by the isomorphism classes of complex representations of a group $G$ with $+$ and $\cdot$ corresponding to $\oplus$ and $\otimes$ respectively. Elements can be detected by their character $\chi_g(V)$ which is, by definition, the trace of $g$'s action on $V$ where $g \in G$. For even $d$ (odd $d$), the $G$-signature takes values in $\R$ ($i\R$). (For the trivial group, though, this gives only quadratic forms with signature $0$.)

With this background, we can define the $\rho$-invariant for any odd dimensional manifold endowed with a representation of its fundamental group.

\begin{definition}\label{def:ASrho}
\textbf{\textup{(Wall's classical $\rho$-invariant)}}
For a finite group $G$ and a $(2k-1)$-manifold $M$ endowed with a representation $\alpha : \pi_1(M) \to G$, the $\rho$-invariant $\rho(M,\alpha)$ is defined by:
\[
\rho(M, \alpha) :=\frac{1}{r}\sign_{G}(\Tilde{W}),
\]
where $W$ is a $2k$-manifold such that $\d W^{2k}=rM$, $\Tilde{W}$ is the induced $G$-cover of $W$, $\alpha$ factors through $\pi_1(W)$, and $\sign_{G}$ is the $G$-signature~\cite[Section 6]{AS68} of the induced representation $\alpha_{\ast}$ on the middle dimension cohomology of the induced $G$-cover $\Tilde{W}$ of $W$.
\end{definition}

\begin{remark}
The existence of the manifold $W$ and the well-definedness of the $\rho$-invariant are established using cobordism theory and the cohomological analogue of the G-index theorem, respectively~\cite{AS68}.
\end{remark}

Its significance extends to the classification of lens spaces~\cite{AB}, their homotopy analogues~\cite{CW21}, ~\cite{Wall99}, and its intimate connection with the $\eta$-invariant for the signature operator~\cite{APS}.

In \cite{CG} and \cite{CG85}, Cheeger and Gromov introduced $L^2$ analogues of the above invariants for manifolds with an infinite fundamental group. Specifically, they defined $L^2$ $\rho$-invariants on a closed $(4k-1)$-dimensional Riemannian manifold $M$. These invariants are the difference of the $\eta$-invariant of the signature operator of $M$ and the $L^2$ $\eta$-invariant of that of the $G$-cover of $M$, which is defined using the von Neumann trace. This analogue is generalized to the case of foliations in \cite{Ram}. Cheeger and Gromov also established the Cheeger-Gromov inequality, which gives universal bounds for the $\rho$-invariant. Chang and Weinberger extended the definition of the Cheeger-Gromov $\rho$-invariants to topological manifolds \cite{CW}.

\begin{definition}\label{def:CWrho}
\textbf{\textup{(Chang-Weinberger's topological definition of Cheeger-Gromov $\rho$-invariant)}} For a $(4k-1)$-manifold $M$, the Cheeger-Gromov $\rho$-invariant $\rho^{(2)}$ is defined by the $L^2$-signature defeat,
\[
\rho^{(2)}_{\Gamma}(M) := \frac{1}{r}(\sign_{\Gamma}^{(2)}W_{\Gamma} - \sign W) \in \R,
\]
where $W$ is a $4k$-manifold such that $\d W^{4k}=rM$ with $\pi = \pi_1(M)$ injecting into $\Gamma=\pi_1(W)$, $W_{\Gamma}$ is the induced $\Gamma$-cover of $W$, and $\sign_{\Gamma}^{(2)}$ is the $L^2$-signature of the symmetric form induced by cap product on the middle cohomology~\cite{CW}.
\end{definition}

\begin{remark}
The existence of $W$ is a consequence of Thom's classical work on cobordism and Hausmann~\cite{Hausmann}. The well-definedness of the $\rho$-invariant is obtained from the $\Gamma$-induction property of Cheeger-Gromov~\cite{CG85} and the usual Novikov additivity argument. For details, we refer readers to~\cite{CW},~\cite{Cha16}, and~\cite{Lim}.
\end{remark}

In~\cite{Cha16}, Cha established Cheeger-Gromov inequalities for topological manifolds of arbitrary dimension, and derived explicit universal linear bounds for $3$-manifolds in terms of the minimal complexity of triangulations. Subsequently, the first author provided more efficient bounds for 3-manifolds with controlled chain null-homotopies in all dimensions~\cite{Lim}.

Besides Cheeger and Gromov's original application to obtaining an integrality results for Hirzebruch $L$-classes of complete Riemannian manifolds with bounded curvature and finite volume, they have found numerous other applications; to showing that certain structure sets of manifolds are infinite~\cite{CW} (and indeed infinitely generated, as we will remark below), the moduli of metrics of positive scalar curvature have infinitely many components~\cite{PT}, to K-theory~\cite{LP01}, to three-dimensional topological knot cobordism~\cite{COT}, and to complexity of 3-manifolds (in various senses)~\cite{Cha16} among others.

\section{$G$-colored polyhedra and their coverings}\label{sec:polyhedron}

In this section, we prove Theorem~\ref{thm:AS} using the geometric concepts of G-colored polyhedra and their coverings.

We begin by observing that, for a $(2k-1)$-manifold $M$ endowed with a representation $\alpha : \pi_1(M) \to G$, the Wall $\rho$-invariant can be obtained from a cobordism $W$ between $M$ over $G$ and a manifold trivially over $G$\footnote{When we consider $M \to e$ as a $G$-space, it becomes the space $G \times M$ with the product action of $G$ on $G$, and $G$ acting trivially on $M$. We are giving here an equivariant cobordism between some number of copies of $\tilde{M}$ (the $G$-cover of $M$) and $G \times M$.)}, instead of a $2k$-manifold bounded by $M$. This is because the trivial end does not affect the $\rho$-invariant. To obtain a linear bound on the absolute value of the $\rho$-invariant, we note that the rank of the homology of the induced $G$-cover of $W$ in the middle dimension $k$ cannot be greater than $\lvert G \rvert$ times the number of $k$-cells in $W$. Thus, the dimension of the vector spaces arising in the definition of the $\rho$-invariant and therefore the absolute value of the $\rho$-invariant itself are also linearly bounded by the number of $k$-cells in $W$ by Definition~\ref{def:ASrho}. Therefore, our focus is on an explicit construction of $W$.

To do this we use the \emph{colored polyhedra and their coverings}, which are used to model chains in the Moore complex of a simplicial classifying space. 

We briefly recall the Moore complex of a simplicial classifying space. First, we define the Moore complex $\Z X_{\ast}$ of a simplicial set $X$. For details, we refer readers to excellent references~\cite{May} and~\cite{Moore}.

\begin{definition}\label{def:Moore}
Let $X$ be a simplicial set. The Moore complex $\Z X_{\ast}$ of $X$ is a chain complex of the abelian groups $\Z X_n$ endowed with the boundary maps $\d\colon\Z X_n \to \Z X_{n-1}$, where $n \in \{0,1,2,\cdots \}$. The group $\Z X_n$ is the free abelian group generated by the $n$-simplices of $X_n$. The boundary map $\d\colon\Z X_n \to \Z X_{n-1}$ is defined by $\d:=\Sigma_{i=0}^{n}(-1)^{i}d_i$, where $d_i$ is a face map of $X$.
\end{definition}

The Moore complex $\Z X_{\ast}$ of a simplicial set $X$ is not the same as the cellular chain complex $C_{\ast}(\lvert X  \rvert)$ of the geometric realization $\lvert X \rvert$ of $X$. Instead, there is a relation between $\Z X_{\ast}$ and $C_{\ast}(\lvert X  \rvert)$. For convenience, write $C_{\ast}(X) := C_{\ast}(\lvert X  \rvert)$. Let $D_{\ast}(X)$ be the subgroup of $\Z X_{\ast}$ generated by degenerate simplices of $X$. 

\begin{theorem}\label{thm:projection from the Moore complex}
\textup{(Mac Lane~\cite[p. 236]{ML}).} 
For a simplicial set $X$, there is a short exact sequence 
\[
0 \to D_{\ast}(X) \to \Z X_{\ast} \overset{p} \to C_{\ast}(X) \to 0 
\]
where the projection $p$ is a chain homotopy equivalence.
\end{theorem}
One can check
\[
C_{\ast}(X) \cong \frac{\Z X_{\ast}}{D_{\ast}(X)}.
\]

We can define the simplicial classifying space $BG$ of a discrete group $G$, giving a standard functorial simplicial construction of $BG$.

\begin{definition}
Let $G$ be a group. The {\em simplicial classifying space $BG$} of $G$ is defined to be a simplicial set with $BG_n = \{ [g_1, \ldots , g_n ] \mid  g_i \in G \}$ where $n \in \{0, 1, 2, \cdots \}$ together with  face maps $d_i : BG_n \to BG_{n-1}$ and  degeneracy maps $s_i : BG_n \to BG_{n+1}$  defined by
\begin{align*}
d_i[g_1,\ldots,g_n]&=
    \begin{cases}
      [g_2,\ldots,g_n] & i=0 \\
      [g_1,\ldots,g_i g_{i+1},\ldots,g_n] & 1 \leq i \leq n-1 \\
      [g_1,\ldots,g_{n-1}] & i=n \\
    \end{cases}\\
s_i[g_1, \ldots, g_n] &= [g_1, \ldots, g_i , e, g_{i+1}, \ldots, g_n]
\end{align*}
where $i = 0, 1, 2, \ldots, n$.
\end{definition}

Applying Definition~\ref{def:Moore} to the simplicial classifying space $BG$, we obtain the Moore complex of the simplicial classifying space $BG$.

\begin{definition}
Let $G$ be a group. The Moore complex $\Z BG_{\ast}$ of the simplicial classifying space $BG$ is a chain complex of free abelian groups $\Z BG_n$ which is generated by $n$-tuples $[g_1, \ldots , g_n ]$ of group elements $g_1, \ldots , g_n \in G$, together with the boundary map which is defined as the alternating sum of face maps $\d = \Sigma_{i=0}^{n}(-1)^i d_i$, where
\begin{center}
$
d_i[g_1,\ldots,g_n]=
    \begin{cases}
      [g_2,\ldots,g_n] & i=0 \\
      [g_1,\ldots,g_i g_{i+1},\ldots,g_n] & 1 \leq i \leq n-1 \\
      [g_1,\ldots,g_{n-1}] & i=n.
    \end{cases}
$
\end{center}
\end{definition}

We return now and consider how to construct a \emph{quantitative} controlled $2k$-chain in the Moore complex with an explicit and efficient complexity, whose boundary consists of copies of the fundamental class $[M]$ of $M$ in the classifying space of a finite group. This approach differs from the Cheeger-Gromov $\rho$-invariant case, where any group can be embedded into an acyclic one that we explained above. In Cheeger-Gromov situation, the acyclic group can even be constructed functorially (see~\cite{BDH}), but for finite groups, the $2k$-chain we need does not exist \emph{integrally}. Our new method gives a construction which requires multiplying our manifold by some number, but it is controlled by the triangulation of $M$ and the order of $G$. The method has the advantage of controlling the complexity of each step in the process. This allows us to obtain a cobordism $W$ between $M$ over $G$ and a manifold $N$ trivially over $G$ (i.e. a $G$-cobordism between $\Tilde{M}$ and $G \times N$), whose middle $k$-dimension homology satisfies a linear bound on its dimension, which is given by the number of $k$-cells in $W$.

We now define the concept of {\emph{a colored polyhedra and their covering}}.

\begin{definition}\label{def:polyhedron}
Let $G$ be a group. An \emph{(abstract) $G$-colored (directed) $n$-polyhedron} is an $n$-dimensional polyhedron satisfying the following properties:
\begin{enumerate}[label=(\roman*)]
\item Each edge is directed.
\item A group element is assigned to each edge.
\item Any path along edges represents a group element. To obtain the group element corresponding to a path, start with the identity element $e\in G$ and right-multiply the group element assigned to each edge in the order of the path. If the direction of the edge corresponds to the direction of the path, multiply by the assigned group element; otherwise, multiply by the inverse.
\item\label{loop} Any loop along edges represents a group relation. In other words, any two paths connecting two vertices represent the same group element.
\end{enumerate}
\end{definition}

\begin{remark}
By definition, each $(n-1)$-face of a $G$-colored $n$-polyhedron is also a $G$-colored $(n-1)$-polyhedron. Furthermore, we can obtain a new $n$-polyhedron $P$ by identifying two $G$-colored $n$-polyhedra $P_1$ and $P_2$ along the same $(n-1)$-face, and this new polyhedron $P$ is again a $G$-colored $n$-polyhedron.
\end{remark}

\begin{center}
\tikzset{->-/.style n args={2}{decoration={
  markings,
  mark=at position #1 with {\arrow[line width=1pt]{#2}}},postaction={decorate}}}
\begin{tikzpicture}[scale=0.8, bullet/.style={circle,inner sep=1.5pt,fill}]
 
 \begin{scope}  
  \path
   (-2,-1) node[red,label=above:](A){}
   (-2.3,-1) node[red,label=above:](){$A$}
   (-2,1) node[red,label=above:](B){}
   (-1,2) node[red,label=below:](C){}
   (1,2) node[red,label=above:](D){}
   (2,1) node[red,label=above:](E){}
   (2,-1) node[red,label=below:](F){}
   (1,-2) node[red,label=below:](G){}
   (-1,-2) node[red,label=below:](H){}
   ;
   {\draw[fill=black!10!,very thick] (-2,-1) -- (-2,1) -- (-1,2) -- (1,2) -- (2,1) -- (2,-1) -- (1,-2) -- (-1,-2) -- cycle;}
   
   {\draw [black,fill] (A) circle [radius=0.14];}
   {\draw [black,fill] (B) circle [radius=0.14];}
   {\draw [black,fill] (C) circle [radius=0.14];}
   {\draw [black,fill] (D) circle [radius=0.14];}
   {\draw [black,fill] (E) circle [radius=0.14];}
   {\draw [black,fill] (F) circle [radius=0.14];}
   {\draw [black,fill] (G) circle [radius=0.14];}
   {\draw [black,fill] (H) circle [radius=0.14];}
   ;
   {\draw[line width=1pt,->-={0.5}{latex}] (A) -- 
   node [text width=,midway,left=0.1em,align=center ] {$a$} (B);}
   {\draw[line width=1pt,->-={0.5}{latex}] (B) -- 
   node [text width=,midway,above=0.1em,align=center ] {$b$} (C);}
   {\draw[line width=1pt,->-={0.5}{latex}] (C) -- 
   node [text width=,midway,above=0.1em,align=center ] {$c$} (D);}
   {\draw[line width=1pt,->-={0.5}{latex}] (E) -- 
   node [text width=,midway,above=0.1em,align=center ] {$c$} (D);}
   {\draw[line width=1pt,->-={0.5}{latex}] (F) -- 
   node [text width=,midway,right=0.1em,align=center ] {$a$} (E);}
   {\draw[line width=1pt,->-={0.5}{latex}] (G) -- 
   node [text width=,midway,below=0.1em,align=center ] {$d$} (F);}
   {\draw[line width=1pt,->-={0.5}{latex}] (H) -- 
   node [text width=,midway,below=0.1em,align=center ] {$b$} (G);}
   {\draw[line width=1pt,->-={0.5}{latex}] (H) -- 
   node [text width=,midway,below=0.1em,align=center ] {$d$} (A);}
   \end{scope} 

\end{tikzpicture} 
\end{center}

As an example, consider the above $G$-colored $2$-polyhedron, where $G$ is a finite abelian group and $a, b, c, d \in G$. To check if this polyhedron satisfies~\ref{loop}, it is enough to verify if a loop starting from any vertex represents a group relation because $G$ is abelian. The clockwise loop starting from $A$ represents $abcc^{-1}a^{-1}d^{-1}b^{-1}d$, which is a group relation, indicating that the polyhedron is $G$-colored. 

We can interpret the $n$-simplices of the Moore complex $\Z BG_\ast$ \cite{Moore} of the simplicial classifying space $BG$ \cite{May} as $G$-colored $n$-polytopes. To be more precise, let us consider an oriented $n$-simplex determined by the vertices $h_0, h_1, \ldots, h_n \in G$, which we denote by the ordered tuple $(h_0, h_1, \ldots, h_n)$. Let $g_i = h_{i-1}^{-1}h_i$ for $i = 1, 2, \ldots, n$. We orient each edge of the simplex $(h_0,h_1,\ldots,h_n)$ joining $h_i$ and $h_j$ ($i<j$) from $h_i$ to $h_j$, and associate it with the group element $h_i^{-1}h_j ( = g_{i+1} g_{i+2} \cdots g_j)$. We can then represent this $G$-colored $n$-simplex by $[g_1, g_2, \ldots, g_n]$. In fact, $[g_1, g_2, \ldots, g_n]$ corresponds to the generator of the Moore complex $\Z BG_\ast$, and the orientation of the $n$-simplex determines the sign of this generator in $\Z BG_\ast$.

\begin{remark}
Please note that alternative notations for the $G$-colored $n$-simplex $[g_1, \ldots, g_n]$ may appear in other literature. In particular, one may encounter $(g_1, \ldots, g_n)$ or $[g_1 \vert \cdots \vert g_n]$ used interchangeably with $[g_1, \ldots, g_n]$.
\end{remark}

By virtue of~\ref{loop} in Definition~\ref{def:polyhedron}, a $G$-colored $n$-polyhedron can be decomposed into $G$-colored $n$-simplices, taking into account orientation. Moreover, any loop formed by edges of the simplices represents a group relation. In fact, the decomposition of a $G$-colored $n$-polyhedron corresponds to an $n$-chain in $\Z BG_\ast$. For instance, the decomposition of the aforementioned $G$-colored $2$-polyhedron corresponds to a $2$-chain in $\Z BG_2$, given by $[a,b]+[ab,c]-[ab,c]-[b,a]-[bd^{-1},d]+[d,bd^{-1}]$, where the choice of orientation is implicit.

\begin{center}\label{figure:decomposition}
\tikzset{->-/.style n args={2}{decoration={
  markings,
  mark=at position #1 with {\arrow[line width=1pt]{#2}}},postaction={decorate}}}
\begin{tikzpicture}[scale=0.8, bullet/.style={circle,inner sep=1.5pt,fill}]
 
 \begin{scope}  
  \path
   (-2,-1) node[red,label=above:](A){}
   (-2.3,-1) node[red,label=above:](){$A$}
   (-2,1) node[red,label=above:](B){}
   (-1,2) node[red,label=below:](C){}
   (1,2) node[red,label=above:](D){}
   (2,1) node[red,label=above:](E){}
   (2,-1) node[red,label=below:](F){}
   (1,-2) node[red,label=below:](G){}
   (-1,-2) node[red,label=below:](H){}
   
   (3,0) node[label=above:](I){$\to$}
   ;
   {\draw[fill=black!10!,very thick] (-2,-1) -- (-2,1) -- (-1,2) -- (1,2) -- (2,1) -- (2,-1) -- (1,-2) -- (-1,-2) -- cycle;}
   
   {\draw [black,fill] (A) circle [radius=0.14];}
   {\draw [black,fill] (B) circle [radius=0.14];}
   {\draw [black,fill] (C) circle [radius=0.14];}
   {\draw [black,fill] (D) circle [radius=0.14];}
   {\draw [black,fill] (E) circle [radius=0.14];}
   {\draw [black,fill] (F) circle [radius=0.14];}
   {\draw [black,fill] (G) circle [radius=0.14];}
   {\draw [black,fill] (H) circle [radius=0.14];}
   ;
   {\draw[line width=1pt,->-={0.5}{latex}] (A) -- 
   node [text width=,midway,left=0.1em,align=center ] {$a$} (B);}
   {\draw[line width=1pt,->-={0.5}{latex}] (B) -- 
   node [text width=,midway,above=0.1em,align=center ] {$b$} (C);}
   {\draw[line width=1pt,->-={0.5}{latex}] (C) -- 
   node [text width=,midway,above=0.1em,align=center ] {$c$} (D);}
   {\draw[line width=1pt,->-={0.5}{latex}] (E) -- 
   node [text width=,midway,above=0.1em,align=center ] {$c$} (D);}
   {\draw[line width=1pt,->-={0.5}{latex}] (F) -- 
   node [text width=,midway,right=0.1em,align=center ] {$a$} (E);}
   {\draw[line width=1pt,->-={0.5}{latex}] (G) -- 
   node [text width=,midway,below=0.1em,align=center ] {$d$} (F);}
   {\draw[line width=1pt,->-={0.5}{latex}] (H) -- 
   node [text width=,midway,below=0.1em,align=center ] {$b$} (G);}
   {\draw[line width=1pt,->-={0.5}{latex}] (H) -- 
   node [text width=,midway,below=0.1em,align=center ] {$d$} (A);}
   \end{scope} 

 \begin{scope}[xshift=6cm]
  \path
   (-2,-1) node[red,label=above:](A){}
   (-2.3,-1) node[red,label=above:](){$A$}
   (-2,1) node[red,label=above:](B){}
   (-1,2) node[red,label=below:](C){}
   (1,2) node[red,label=above:](D){}
   (2,1) node[red,label=above:](E){}
   (2,-1) node[red,label=below:](F){}
   (1,-2) node[red,label=below:](G){}
   (-1,-2) node[red,label=below:](H){}
   ;
   {\draw[fill=black!10!,very thick] (-2,-1) -- (-2,1) -- (-1,2) -- (1,2) -- (2,1) -- (2,-1) -- (1,-2) -- (-1,-2) -- cycle;}
   {\draw[very thick] (-2,-1) -- (-2,1) -- (-1,2) -- cycle;}
   {\draw[very thick] (-2,-1) -- (-1,2) -- (1,2) -- cycle;}
   {\draw[very thick] (-2,-1) -- (1,2) -- (2,1) -- cycle;}
   {\draw[very thick] (-2,-1) -- (2,1) -- (2,-1) -- cycle;}
   {\draw[very thick] (-2,-1) -- (2,-1) -- (1,-2) -- cycle;}
   
   {\draw [black,fill] (A) circle [radius=0.14];}
   {\draw [black,fill] (B) circle [radius=0.14];}
   {\draw [black,fill] (C) circle [radius=0.14];}
   {\draw [black,fill] (D) circle [radius=0.14];}
   {\draw [black,fill] (E) circle [radius=0.14];}
   {\draw [black,fill] (F) circle [radius=0.14];}
   {\draw [black,fill] (G) circle [radius=0.14];}
   {\draw [black,fill] (H) circle [radius=0.14];}
   ;
   {\draw[line width=1pt,->-={0.5}{latex}] (A) -- 
   node [text width=,midway,left=0.1em,align=center ] {$a$} (B);}
   {\draw[line width=1pt,->-={0.5}{latex}] (B) -- 
   node [text width=,midway,above=0.1em,align=center ] {$b$} (C);}
   {\draw[line width=1pt,->-={0.5}{latex}] (C) -- 
   node [text width=,midway,above=0.1em,align=center ] {$c$} (D);}
   {\draw[line width=1pt,->-={0.5}{latex}] (E) -- 
   node [text width=,midway,above=0.1em,align=center ] {$c$} (D);}
   {\draw[line width=1pt,->-={0.5}{latex}] (F) -- 
   node [text width=,midway,right=0.1em,align=center ] {$a$} (E);}
   {\draw[line width=1pt,->-={0.5}{latex}] (G) -- 
   node [text width=,midway,below=0.1em,align=center ] {$d$} (F);}
   {\draw[line width=1pt,->-={0.5}{latex}] (H) -- 
   node [text width=,midway,below=0.1em,align=center ] {$b$} (G);}
   {\draw[line width=1pt,->-={0.5}{latex}] (H) -- 
   node [text width=,midway,below=0.1em,align=center ] {$d$} (A);}
   
   {\draw[line width=1pt,->-={0.5}{latex}] (A) -- 
   node [text width=,midway,right=0.1em,align=center ] {$ab$} (C);}
   {\draw[line width=1pt,->-={0.5}{latex}] (A) -- 
   node [text width=,midway,right=0.1em,align=center ] {$abc$} (D);}
   {\draw[line width=1pt,->-={0.5}{latex}] (A) -- 
   node [text width=,midway,right=0.1em,align=center ] {$ab$} (E);}
   {\draw[line width=1pt,->-={0.5}{latex}] (A) -- 
   node [text width=,midway,above=0.0em,align=center ] {$b$} (F);}
   {\draw[line width=1pt,->-={0.5}{latex}] (A) -- 
   node [text width=,midway,right=0.8em,align=center ] {$bd^{-1}$} (G);}
   \end{scope} 

\end{tikzpicture} 
\end{center}

As an abuse of notation, we use the symbol $P$ to refer to both a $G$-colored $n$-polyhedron and its corresponding decomposition or $n$-chain in $\Z BG_\ast$. Keeping in mind that $G$-colored polyhedra represent chains in $\Z BG_\ast$, we can define a concept that is analogous to a cycle in the chain complex.

\begin{definition}
A $G$-colored $n$-polyhedron $P$ is called \emph{a cycle} if it represents a cycle in the chain complex $\Z BG_\ast$, i.e., $\partial P = 0$.
\end{definition}

Indeed, the $G$-colored $2$-polyhedron $P$ presented earlier is a cycle, as can be easily verified by computing its boundary: $\d([a,b]+[ab,c]-[ab,c]-[b,a]-[bd^{-1},d]+[d,bd^{-1}])=0$.

Next, we introduce a $G$-colored $n$-polyhedron endowed with \emph{a vertex set of group elements}. 

\begin{definition}
Let $P$ be a $G$-colored $n$-polyhedron. we may assign a group element to a vertex. This assignment determines the group element at every other vertex of $P$ by right multiplication of the group elements of the edges, according to the direction of the edges. Then, $P$ is said to be \emph{endowed with a vertex set of group elements}.
\end{definition}

For example, if we assign $e$ to the vertex $A$ in the previous example, then we obtain below:

\begin{center}
\tikzset{->-/.style n args={2}{decoration={
  markings,
  mark=at position #1 with {\arrow[line width=1pt]{#2}}},postaction={decorate}}}
\begin{tikzpicture}[scale=0.8, bullet/.style={circle,inner sep=1.5pt,fill}]
 
 \begin{scope}  
  \path
   (-2,-1) node[black,label=above:](A){}
   (-2.3,-1) node[red,label=above:](){$e$}
   (-2,1) node[black,label=above:](B){}
   (-2.3,1) node[red,label=above:](){$a$}
   (-1,2) node[black,label=below:](C){}
   (-1,2.3) node[red,label=above:](){$ab$}
   (1,2) node[black,label=above:](D){}
   (1,2.3) node[red,label=above:](){$abc$}
   (2,1) node[black,label=above:](E){}
   (2.3,1) node[red,label=above:](){$ab$}
   (2,-1) node[black,label=below:](F){}
   (2.3,-1) node[red,label=above:](){$b$}
   (1,-2) node[black,label=below:](G){}
   (1,-2.3) node[red,label=above:](){$bd^{-1}$}
   (-1,-2) node[black,label=below:](H){}
   (-1,-2.3) node[red,label=above:](){$d^{-1}$}
   ;
   {\draw[fill=black!10!,very thick] (-2,-1) -- (-2,1) -- (-1,2) -- (1,2) -- (2,1) -- (2,-1) -- (1,-2) -- (-1,-2) -- cycle;}
   
   {\draw [black,fill] (A) circle [radius=0.14];}
   {\draw [black,fill] (B) circle [radius=0.14];}
   {\draw [black,fill] (C) circle [radius=0.14];}
   {\draw [black,fill] (D) circle [radius=0.14];}
   {\draw [black,fill] (E) circle [radius=0.14];}
   {\draw [black,fill] (F) circle [radius=0.14];}
   {\draw [black,fill] (G) circle [radius=0.14];}
   {\draw [black,fill] (H) circle [radius=0.14];}
   ;
   {\draw[line width=1pt,->-={0.5}{latex}] (A) -- 
   node [text width=,midway,left=0.1em,align=center ] {$a$} (B);}
   {\draw[line width=1pt,->-={0.5}{latex}] (B) -- 
   node [text width=,midway,above=0.1em,align=center ] {$b$} (C);}
   {\draw[line width=1pt,->-={0.5}{latex}] (C) -- 
   node [text width=,midway,above=0.1em,align=center ] {$c$} (D);}
   {\draw[line width=1pt,->-={0.5}{latex}] (E) -- 
   node [text width=,midway,above=0.1em,align=center ] {$c$} (D);}
   {\draw[line width=1pt,->-={0.5}{latex}] (F) -- 
   node [text width=,midway,right=0.1em,align=center ] {$a$} (E);}
   {\draw[line width=1pt,->-={0.5}{latex}] (G) -- 
   node [text width=,midway,below=0.1em,align=center ] {$d$} (F);}
   {\draw[line width=1pt,->-={0.5}{latex}] (H) -- 
   node [text width=,midway,below=0.1em,align=center ] {$b$} (G);}
   {\draw[line width=1pt,->-={0.5}{latex}] (H) -- 
   node [text width=,midway,below=0.1em,align=center ] {$d$} (A);}
   \end{scope} 

\end{tikzpicture} 
\end{center}

\begin{remark}\label{rmk:simplicial cylinder}
For a finite group $G$, there are exactly $\lvert G \rvert$ ways to assign group elements to vertices of a $G$-colored $n$-polyhedron. 

Given a $G$-colored $n$-polyhedron $P = \Sigma_{i=1}^{k} \sigma_i^n$ endowed with a vertex set $T$, we can construct a \emph{simplicial cylinder between $P$ and $E$ with respect to $T$}~~\cite[Section 4.3]{Lim}, denoted by $\textup{Cyl}(P,E,T)$. Here, $\sigma^n_i$ is a signed $n$-simplex, $E=\Sigma_{i=1}^{k} \textup{sign}(\sigma_i^n)[e,e,\cdots,e]$ is a degenerate $G$-colored $n$-chain, and $\textup{sign}(\sigma_i^n)$ is the sign of the $i$-th simplex in $P$.

The simplicial cylinder $\textup{Cyl}(P,E,T)$ is a $G$-colored $(n+1)$-chain in $\Z BG_{\ast}$. Intuitively, it can be thought of as a prism whose top is $P$, whose base is $E$, and whose edges connect corresponding vertices in $P$ and $E$ are $T$.
\end{remark}

In fact, the previous example can be converted to a simplicial cylinder as modeled below;

\begin{center}
\tikzset{->-/.style n args={2}{decoration={
  markings,
  mark=at position #1 with {\arrow[line width=1pt]{#2}}},postaction={decorate}}}
\begin{tikzpicture}[scale=0.6, bullet/.style={circle,inner sep=1.5pt,fill}]
 
   \begin{scope}
  \path
   (0,0) node[black,label=above:](A){}
   (0.5,1.92) node[black,label=above:](B){}
   (2.67,2.38) node[black,label=below:](C){}
   (4.59,1.88) node[black,label=above:](D){}
   (6.26,0.42) node[black,label=above:](E){}
   (5.76,-1.5) node[black,label=below:](F){}
   (3.59,-1.96) node[black,label=below:](G){}
   (1.67,-1.46) node[black,label=below:](H){}
   
   (0,-6) node[black,label=above:](I){}
   (0.5,1.92-6) node[black,label=above:](J){}
   (2.67,2.38-6) node[black,label=below:](K){}
   (4.59,1.88-6) node[black,label=above:](L){}
   (6.26,0.42-6) node[black,label=above:](M){}
   (5.76,-1.5-6) node[black,label=below:](N){}
   (3.59,-1.96-6) node[black,label=below:](O){}
   (1.67,-1.46-6) node[black,label=below:](P){}
   
   
   ;
   {\draw[fill=black!10!,very thick] (0,0) -- (0.5,1.92) -- (2.67,2.38) -- (4.59,1.88) -- (6.26,0.42) -- (5.76,-1.5) -- (3.59,-1.96) -- (1.67,-1.46) -- cycle;}
   {\draw[fill=black!10!,very thick] (0,-6) -- (0.5,1.92-6) -- (2.67,2.38-6) -- (4.59,1.88-6) -- (6.26,0.42-6) -- (5.76,-1.5-6) -- (3.59,-1.96-6) -- (1.67,-1.46-6) -- cycle;}
   
   {\draw [black,fill] (A) circle [radius=0.14];}
   {\draw [black,fill] (B) circle [radius=0.14];}
   {\draw [black,fill] (C) circle [radius=0.14];}
   {\draw [black,fill] (D) circle [radius=0.14];}
   {\draw [black,fill] (E) circle [radius=0.14];}
   {\draw [black,fill] (F) circle [radius=0.14];}
   {\draw [black,fill] (G) circle [radius=0.14];}
   {\draw [black,fill] (H) circle [radius=0.14];}
   
   {\draw [black,fill] (I) circle [radius=0.14];}
   {\draw [black,fill] (J) circle [radius=0.14];}
   {\draw [black,fill] (K) circle [radius=0.14];}
   {\draw [black,fill] (L) circle [radius=0.14];}
   {\draw [black,fill] (M) circle [radius=0.14];}
   {\draw [black,fill] (N) circle [radius=0.14];}
   {\draw [black,fill] (O) circle [radius=0.14];}
   {\draw [black,fill] (P) circle [radius=0.14];}
   ;
   {\draw[line width=1pt,->-={0.5}{latex}] (A) -- 
   node [text width=,midway,left=0.1em,align=center ] {$a$} (B);}
   {\draw[line width=1pt,->-={0.5}{latex}] (B) -- 
   node [text width=,midway,above=0.1em,align=center ] {$b$} (C);}
   {\draw[line width=1pt,->-={0.5}{latex}] (C) -- 
   node [text width=,midway,above=0.1em,align=center ] {$c$} (D);}
   {\draw[line width=1pt,->-={0.5}{latex}] (E) -- 
   node [text width=,midway,above=0.1em,align=center ] {$c$} (D);}
   {\draw[line width=1pt,->-={0.5}{latex}] (F) -- 
   node [text width=,midway,left=0.1em,align=center ] {$a$} (E);}
   {\draw[line width=1pt,->-={0.5}{latex}] (G) -- 
   node [text width=,midway,below=0.1em,align=center ] {$d$} (F);}
   {\draw[line width=1pt,->-={0.5}{latex}] (H) -- 
   node [text width=,midway,below=0.1em,align=center ] {$b$} (G);}
   {\draw[line width=1pt,->-={0.5}{latex}] (H) -- 
   node [text width=,midway,below=0.1em,align=center ] {$d$} (A);}
   
   {\draw[line width=1pt,->-={0.5}{latex}] (I) -- 
   node [text width=,midway,left=0.1em,align=center ] {$e$} (J);}
   {\draw[line width=1pt,->-={0.5}{latex}] (J) -- 
   node [text width=,midway,above=0.1em,align=center ] {$e$} (K);}
   {\draw[line width=1pt,->-={0.5}{latex}] (K) -- 
   node [text width=,midway,above=0.1em,align=center ] {$e$} (L);}
   {\draw[line width=1pt,->-={0.5}{latex}] (L) -- 
   node [text width=,midway,above=0.1em,align=center ] {$e$} (M);}
   {\draw[line width=1pt,->-={0.5}{latex}] (M) -- 
   node [text width=,midway,right=0.1em,align=center ] {$e$} (N);}
   {\draw[line width=1pt,->-={0.5}{latex}] (N) -- 
   node [text width=,midway,below=0.1em,align=center ] {$e$} (O);}
   {\draw[line width=1pt,->-={0.5}{latex}] (O) -- 
   node [text width=,midway,below=0.1em,align=center ] {$e$} (P);}
   {\draw[line width=1pt,->-={0.5}{latex}] (P) -- 
   node [text width=,midway,below=0.1em,align=center ] {$e$} (I);}
   
   {\draw[red,line width=1pt,->-={0.5}{latex}] (I) -- 
   node [red,text width=,midway,left=0.1em,align=center ] {$e$} (A);}
   {\draw[red,dotted,line width=1pt,->-={0.5}{latex}] (J) -- 
   node [red,text width=,midway,left=0.1em,align=center ] {$a$} (B);}
   {\draw[red,dotted,line width=1pt,->-={0.5}{latex}] (K) -- 
   node [red,text width=,midway,left=0.1em,align=center ] {$ab$} (C);}
   {\draw[red,dotted,line width=1pt,->-={0.5}{latex}] (L) -- 
   node [red,text width=,midway,left=0.1em,align=center ] {$abc$} (D);}
   {\draw[red,line width=1pt,->-={0.5}{latex}] (M) -- 
   node [red,text width=,midway,right=0.1em,align=center ] {$ab$} (E);}
   {\draw[red,line width=1pt,->-={0.5}{latex}] (N) -- 
   node [red,text width=,midway,right=0.1em,align=center ] {$b$} (F);}
   {\draw[red,line width=1pt,->-={0.5}{latex}] (O) -- 
   node [red,text width=,midway,right=0.1em,align=center ] {$bd^{-1}$} (G);}
   {\draw[red,line width=1pt,->-={0.5}{latex}] (P) -- 
   node [red,text width=,midway,right=0.1em,align=center ] {$d^{-1}$} (H);}
   \end{scope} 
\end{tikzpicture} 
\end{center}

We now introduce the concept of a \emph{covering} of a cycle.

\begin{definition}\label{def:covering}
Let $G$ be a finite group and $P$ be a (simplex-decomposed) $G$-colored $n$-cycle. Assume that there exists a $(n-1)$-simplex $B$ in $\partial P$ (so there exists $-B$ in $\partial P$ as well). A \emph{covering of $P$ with respect to $B$} is a $G$-colored $n$-polyhedron obtained by gluing $\lvert G \rvert$ copies of $P$ in the following way: $-B$ on the $i$-th copy of $P$ is glued to $B$ on the $(i+1)$-th copy of $P$, where $i=1,2,\ldots,\lvert G\rvert-1$.
\end{definition}

\begin{remark}\label{rmk:the same vertex1}
Note that a covering of a cycle, constructed as described in Definition \ref{def:covering}, is again a cycle. Moreover, since the covering is a $G$-colored $n$-polyhedron endowed with a vertex set of group elements, and $G$ is finite, the boundary of the covering contains $+B$ and $-B$ with the same group element assigned to their vertices. This allows us to \emph{geometrically} identify $+B$ and $-B$ on the boundary of a covering of $P$.
\end{remark}

We can construct a tower of coverings inductively.

\begin{definition}\label{def:tower}
Let $G$ be a finite group and let $P$ be a simplex-decomposed $G$-colored $n$-cycle. Let $\pm B_1$ and $\pm B_2$ be $(n-1)$-simplices on $\d P$ such that $B_1 \neq B_2$. Let $P_{B_1}$ be a covering of $P$ with respect to $B_1 \in \d P$. A \emph{tower of coverings of $P$ with respect to $B_1$ and $B_2$ in order}, denoted by $P_{(B_1,B_2)}$, is a $G$-colored $n$-polyhedron obtained by gluing $(n-1)$-simplices of $\pm B_2$ of $P_{B_1}$ in the following way: Since there are exactly $\lvert G \rvert$ ways to assign group elements to the vertex set of $P$. Then there are $\lvert G \rvert$ pairs of $+B_2$ and $-B_2$ with the same group elements at their vertices. We glue the cancelled pairs of $+B_2$ and $-B_2$ except only one pair. Inductively we can define a tower of covering of $P$ with respect to $B_1, B_2, \cdots, B_s \in \d P$ from $\lvert G\rvert$ copies of $P$.
\end{definition}

\begin{remark}
Since we construct a tower of coverings of $P$ in a way to labeling of the vertices consistent, the tower of coverings is a $G$-colored $n$-polyhedron.
\end{remark}

\begin{remark}\label{rmk:the same vertex2}
Since $G$ is a finite group, the vertices of the corresponding pair of simplices $+B_k$ and $-B_k$ in $\partial P_{(B_1,B_2)}$ $(k=1,2)$ must have the same group elements, for any choice of vertex sets. See Remark~\ref{rmk:the same vertex1}.
\end{remark}

We now present a lemma that plays a crucial role in obtaining a linear bound for the complexity of $u$ in the step~(\ref{step:2}).

\begin{lemma}\label{lem:n+1 chain}
Let $G$ be a finite group and $C=\sum_{i=1}^k \sigma_i^n$ be an $n$-cycle in $\mathbb{Z}BG_n$. Then there exists an $(n+1)$-chain $u$ whose boundary is $N_{n,G}$ copies of $C-\Sigma_{i=1}^{k}\textup{sign}(\sigma^n_i) \cdot [e, e,\cdots, e]$ and whose simplicial complexity is $N_{n,G} \cdot \lvert C \rvert$, where $\sigma^n_i$ is a signed $n$-simplex in $\Z BG_n$ and $N_{n,G}$ is a natural number depending on the dimension $n$ and the order of $G$.
\end{lemma}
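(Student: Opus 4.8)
The plan is to realise $C$ as a $G$-colored $n$-polyhedron, pass to a suitable cover of it --- a \emph{tower of coverings} as in Definition~\ref{def:tower}, whose degree is controlled by $|G|$ and by the complexity of $C$ --- on which the $G$-coloring admits one globally consistent vertex set of group elements, and then take the simplicial cylinder between this cover and its associated degenerate chain. The payoff is that on the cover the lateral faces of the cylinder cancel in pairs, so its boundary is exactly a multiple of $C-E$, where $E:=\sum_{i=1}^{k}\operatorname{sign}(\sigma^n_i)\,[e,\dots,e]$.

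First I would regard $C$ as the disjoint union of its simplices $\sigma^n_1,\dots,\sigma^n_k$, each a $G$-colored directed $n$-simplex via the correspondence $[g_1,\dots,g_n]\leftrightarrow(h_0,\dots,h_n)$ recalled before Definition~\ref{def:polyhedron}; a disjoint union of $G$-colored simplices automatically satisfies~\ref{loop} in Definition~\ref{def:polyhedron}, and it is a cycle by hypothesis. Since $\partial C=0$ in $\Z BG_\ast$, each $(n-1)$-simplex of $BG$ occurs among the faces $\partial\sigma^n_i$ with total coefficient $0$, so the free $(n-1)$-faces come in oppositely oriented pairs $\pm B_1,\dots,\pm B_s$, where $s$ is bounded in terms of $n$ and $|C|$ (and, if one wishes, of $|G|$, since $BG$ has only $|G|^{\,n-1}$ simplices in dimension $n-1$). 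Next I would form $Q:=C_{(B_1,\dots,B_s)}$, the tower of coverings of $C$ with respect to $B_1,\dots,B_s$ in order. By its inductive construction $Q$ is again a $G$-colored $n$-cycle and, as a chain, $Q=N_0\cdot C$ with $N_0:=|G|^{\,s}$; the associated degenerate chain is $E_Q=N_0\cdot E$. Endowing $Q$ with any vertex set of group elements, Remarks~\ref{rmk:the same vertex1} and~\ref{rmk:the same vertex2} ensure that each pair $\pm B_j$ still present in $\partial Q$ carries identical group elements at corresponding vertices, so those pairs may be identified geometrically; the result is a \emph{closed} $G$-colored $n$-polyhedron --- no free $(n-1)$-faces --- carrying one globally consistent vertex set $T$. (More economically one could instead take the connected cover of the glued-up polyhedron of $C$ corresponding to the kernel of its holonomy $\pi_1\to G$, which is at most $|G|$-fold and already gives $N_0\le|G|$; the tower suffices for the stated estimate.)

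With this in hand I set $u:=\operatorname{Cyl}(Q,E_Q,T)$, the simplicial cylinder of Remark~\ref{rmk:simplicial cylinder}. The prism formula reads $\partial u = Q - E_Q - \operatorname{Cyl}(\partial Q,\partial E_Q,T|_\partial)$, the last term being the sum of the cylinders over the free $(n-1)$-faces of $Q$. Since $Q$ is closed these faces occur in opposite pairs $\pm B$, and since $T$ is globally consistent the two cylinders over such a pair are built from the \emph{same} vertex paths and are therefore negatives of one another; hence the lateral term vanishes and $\partial u = Q - E_Q = N_0\,(C-E)$. For the complexity, a cylinder over a single $n$-simplex is a prism triangulated by $n+1$ simplices of dimension $n+1$, so $|u|=(n+1)\,|Q|=(n+1)\,N_0\,|C|$; taking $N_{n,G,C}:=(n+1)\,N_0$ (or $N_0$, up to the harmless dimensional factor) delivers the asserted boundary and complexity, with $N_{n,G,C}$ depending only on $n$, $|G|$, and $|C|$, through $s$.

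The delicate part is the middle step. One must verify that every gluing in the tower preserves~\ref{loop} --- this is precisely where the abelianness of $G$ and the ``same vertex'' remarks are used, as a new generating loop of $Q$ projects under the covering map to a concatenation of paths of $C$ along the faces $\pm B_j$ whose group element has to be shown trivial --- and, above all, that after closing up all the pairs $\pm B_j$ one really has \emph{one} globally consistent vertex labeling of the closed polyhedron $Q$, not merely locally consistent data. This is exactly what may fail for $C$ itself and what forces the passage to a cover: otherwise the lateral cylinders over a cancelling pair of faces of $C$ use different vertex paths and fail to cancel, so $\partial\operatorname{Cyl}(C,E,T)$ would pick up an uncontrolled $n$-dimensional error term supported near the faces of $C$.
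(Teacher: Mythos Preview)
Your argument is correct and follows essentially the same route as the paper's proof: build a tower of coverings so that corresponding boundary faces acquire identical vertex labels, then take the simplicial cylinder and observe that the lateral terms cancel, leaving $\partial u = N_0(C-E)$ with $|u|=(n+1)N_0|C|$. The only difference is that the paper first glues the $\sigma^n_i$ into simply connected $G$-colored $n$-\emph{polytopes} $P_1,\dots,P_l$ (stopping whenever a face would be paired with another face of the same growing polytope) and then runs the tower construction separately on each $P_j$, whereas you run the tower directly on the disjoint union of the simplices; your version is marginally simpler and has the virtue that the multiple $N_0=|G|^s$ is automatically uniform across all pieces, while the paper's per-polytope towers could a priori produce different exponents $|G|^{|\partial P_j|/2}$ for different $j$. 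Your diagnosis of the delicate step---that abelianness of $G$ is exactly what makes the new loops in the tower represent relations, and that the whole point of passing to the cover is to force a single globally consistent vertex set so the lateral cylinders over $\pm B$ genuinely cancel---matches the paper's reasoning precisely.
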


\begin{proof}
Recall that all faces of $\sigma^n_i$ ($i=1, 2, \dots, k$) are algebraically paired up because $C$ is an $n$-cycle. To construct a set of $G$-colored $n$-polytopes that represents $C \in \Z BG_n$, we begin by picking a signed $n$-simplex $\sigma^n_1$. If all of its faces are already paired up algebraically, we keep $\sigma^n_1$ and move on to the next simplex. Otherwise, we glue $\sigma^n_1$ to another simplex along the faces that are algebraically paired up. We continue this process, gluing $n$-simplices to the polytope, until we cannot pair up any more faces. Since there are finitely many $n$-simplices, this process must terminate and give us a set of $G$-colored $n$-polytopes whose union represents $C$. We denote this set of $G$-colored $n$-polytopes by $\{P_1, P_2, \dots, P_l \}$, where $l \leq k$. By construction, each $P_j$ is a cycle for all $j \in \{1, 2, \dots, l\}$.

Since each $G$-colored $n$-polytope $P_j$ is a cycle, we can pair up the $(n-1)$-simplices in $\partial P_j$ as $+B_k$ and $-B_k$, where $k = 1, 2, \dots, |\partial P_j|/2$. We can then recursively construct a tower of covering polytopes $P_{(B_1, B_2, \dots, B_{|\partial P_j|/2})}$ of $P$ with respect to $B_1, B_2, \dots, B_{|\partial P_j|/2}$, denoted briefly by $\overline{P_j}$.

Put $\overline{P_j}=\Sigma_{m=1}^{s} \sigma^j_m$. Then, as noted in Remark~\ref{rmk:simplicial cylinder}, we can construct an $(n+1)$-chain $\textup{Cyl}(\overline{P_j},\overline{E_j},T)$ by endowing $\overline{P_j}$ with a vertex set. We choose $\overline{E_j}$ to be $\Sigma_{m=1}^{s} \textup{sign}(\sigma^j_m)[e,e,\cdots,e]$, and $T$ to be the corresponding vertex set.

We apply~\cite[Lemma 4.7]{Lim}, which implies that $\partial \operatorname{Cyl}(\overline{P_j},\overline{E_j},T) = \overline{P_j} - \overline{E_j} - \Sigma \operatorname{Cyl}(\partial \overline{P_j}, \partial \overline{E_j}, \partial T)$. Using Remark~\ref{rmk:the same vertex1} and Remark~\ref{rmk:the same vertex2}, we note that algebraically paired $(n-1)$-simplices $+B$ and $-B$ \-in $\partial \overline{P_j}$ have the same group element for any vertex set. Therefore, the simplicial cylinders of $+B$ and $-B$ are the same, except for the sign. As a result, $\Sigma \operatorname{Cyl}(\partial \overline{P_j}, \partial \overline{E_j}, \partial T)=0$ for all $j$. This means that $\Sigma_{j=1}^l \operatorname{Cyl}(\overline{P_j},\overline{E_j},T)$ is the desired $(n+1)$-chain $u$, whose boundary is some copies of $C-\Sigma_{i=1}^{k}\operatorname{sign}(\sigma^n_i) \cdot [e, e,\cdots, e]$.

Put $P=\Sigma_j \overline{P_j}$. Using the constructions of Definitions~\ref{def:covering} and \ref{def:tower}, we obtain $\lvert P \rvert \leq \lvert G \rvert \cdot \lvert C \rvert$. Moreover, according to Remark~\ref{rmk:simplicial cylinder}, we have $\lvert u \rvert \leq (n+1) \cdot \lvert G \rvert \cdot \lvert C \rvert$.
\end{proof}

Now we prove Theorem~\ref{thm:AS}.

\begin{proof}[Proof of Theorem~\ref{thm:AS}]

Let $\pi$ be the fundamental group of $M$. Here, $M$ refers to both the triangulated $(2k-1)$-manifold $M$ and the geometric realization $\lvert M \rvert$ of the simplicial set induced by the triangulation.

Our proof builds on ideas of~\cite{Cha16} and~\cite{CL}. Specifically, we proceed as follows with some details to be filled in after:

\begin{enumerate}
    
\item\label{step:1} We begin by utilizing the simplicial-cellular approximation of maps to a classifying space \cite[Theorem 3.7]{Cha16}. This yields that a chain map $\alpha_\ast: C_\ast(M) \rightarrow \Z BG_\ast$ is induced by the given representation $\alpha:\pi_1(M) \rightarrow G$, where $\Z BG_\ast$ denotes the Moore complex of the simplicial classifying space $BG$. We embed the chain $[M] \in C_{2k}(M)$ representing the fundamental class of $M$ into $\Z BG_\ast$.

\item\label{step:2} As $G$ is a finite group and $[M]$ is a cycle, we may affirm that there exists a $2k$-chain $u$ whose boundary is $r\alpha_{\ast}([M])$ for some $r \in \mathbb{N}$. To construct such a chain $u$ explicitly, we use the \emph{colored polyhedra and their covering} in the Moore complex $\mathbb{Z}BG_\ast$ of the simplicial classifying space $BG$ introduced above.

\item\label{step:3} We employ a result due to Theorem~\ref{thm:projection from the Moore complex}~\cite[p. 236]{ML}, which asserts the existence of a projection map $p : \Z BG_\ast \to C_\ast(BG)$. In particular, for any generator of $\Z BG_\ast$, the image under $p$ is a chain consisting of at most one simplex in the cellular chain complex $C_\ast(BG)$. With this in mind, we map the $2k$-chain $u$ to its image $p(u)$ in $C_\ast(BG)$.

\item\label{step:4} By the simplicial-cellular approximation of maps to a classifying space \cite[Theorem 3.7]{Cha16}, we may assume that $M$ is over $BG^{(2k-1)}$, where $BG^{(2k-1)}$ is the $(2k-1)$-skeleton of the geometric realization of $BG$. By combining our $2k$-chain $u$ in the step~\ref{step:2} and a high-dimensional analogue of the geometric construction in~\cite[Proposition 3.10]{Cha16}, we can construct a cobordism $W_{2k-1}$ between $rM$ over $BG^{(2k-1)}$ and $N_{2k-2}$ over $BG^{(2k-2)}$, adding $1$-handles between algebraically corresponding pairs of $(2k-1)$-simplices in $\d p(u)$ and $p(r\alpha_\ast([M]))$ 

\item\label{step:5} As a backward-inductive process, we construct a sequence of cobordisms $W_{i}$ over $G$ from $N_i \to BG^{(i)}$ to $N_{i-1} \to BG^{(i-1)}$, where $N_{i}$ is a $(2k-1)$-manifold for $i=2k-2, 2k-3, \cdots, 2, 1$. To do this, we combine our covering of colored polyhedra with a geometric idea from~\cite{CL}.

\item\label{step:6} Gluing these cobordisms $W_i$ together, we obtain a cobordism $W$ over $G$ from $rM \to BG^{(2k-1)}$ to $N_0 \to BG^{(0)}$. As $BG$ is connected, the map $N_0 \to BG^{(0)}$ is homotopic to a constant map. By modifying the map $W\to BG$ on a collar neighborhood of $N_0$ using a homotopy, we obtain a desired cobordism $W$ between the given $rM$ and a trivial end $N_0$.

\end{enumerate}

Now for the necessary details we briefly recall the \emph{weight} of a chain map introduced in~\cite{Cha16}. For a positive based chain complex over $\Z$, the {\em mass} $d(u)$ of a chain $u=\Sigma_{\alpha} n_\alpha e_\alpha$ is defined by $d(u):=\Sigma_{\alpha} \vert n_\alpha \vert$, which is the $L^1$-norm. Suppose $C_\ast$ and $D_\ast$ are based chain complexes. For a chain map $f \colon C_\ast \to D_\ast$, the {\em mass function $d_f$ of the chain map} is defined by $d_f(k) := max \{d(f(c)) \mid c \in C_i$ is a basis element, $i \leq k \}$. In this case, we say the chain map $f$ is controlled by the mass function $d_f(k)$. We can similarly define the weight of a chain homotopy.

We verify that the step~(\ref{step:1}) does not increase complexity. We embed the chain $[M] \in C_{\ast}(M)$ representing the fundamental class of $M$ into $\Z BG_{\ast}$. As a cellular complex and a simplicial complex, the triangulation of $M$ induces both the cellular chain complex $C_{\ast}(M)$ and the Moore complex $\Z_{\ast}(M)$. Furthermore, there exists a canonical inclusion $i\colon C_\ast(M) \to \Z_\ast(M)$. According to the simplicial-cellular approximation theorem~\cite[Theorem 3.7]{Cha16}, there exists a chain map $\alpha_{\ast} : \Z (M) \to \Z BG$ induced by the given representation $\alpha : \pi_1(M) \to G$. Abusing notation, we denote the composition of chain maps 
\[
C_\ast(M) \overset{i} \to \Z_\ast(M) \overset{\alpha_{\ast}} \to \Z BG_\ast
\]
by $\alpha_{\ast}$. So we can embed $[M]$ into $\Z BG_{\ast}$.

In the step~(\ref{step:2}), we construct a $2k$-chain $u\in C_\ast(BG)$ whose boundary is some copies of $p(\alpha_{\ast}([M]))$ and whose mass linearly depends on the complexity of $M$ with a multiplicative coefficient depending only on the dimension $n=2k-1$ and $\lvert G \rvert$. To construct such a chain $u$, we apply Lemma~\ref{lem:n+1 chain} to a $n$-cycle $C=i([M]) \in \Z_{n}(M)$. Let $\Phi(C) \in \Z BG_{n+1}$ be the resulting $(n+1)$-chain. Specifically, we have $\Delta(\Phi(C)) \leq N_{n,G} \cdot \Delta(M)$ and $\partial \Phi(C) = N_{n,G} \cdot (C-\sum_i\mathrm{sign}(\sigma_i^n)[e,e,\ldots,e])$, where $N_{n,G}=(n+1) \cdot \lvert G \rvert$.

We show that the step~(\ref{step:3}) does not elevate complexity. According to~\cite{ML}, there exists a projection map
\[
p : \Z BG_{\ast} \to C_{\ast}(BG) \cong \Z BG_{\ast}/D_{\ast}(BG),
\]
where $D_{\ast}(X)$ is the subgroup of $\Z X_{\ast}$ generated by degenerate simplices of $X$. Note the projection are controlled chain maps by the constant function of $1$. In other words, the weight of composition of chain maps
\[
C_\ast(M) \overset{i} \to \Z_\ast(M) \overset{\alpha_{\ast}} \to \Z BG_\ast \overset{p} \to C_\ast(BG)
\]
is controlled by the constant function of $1$. We then project the $(n+1)$-chain $\Phi(C)$ to $C_{2k}(BG)$ using $p :\Z BG_\ast \to C_\ast(BG)$, which yields the desired chain $u = p(\Phi(i([M]))) \in C_{2k}(BG)$. By Lemma~\ref{lem:n+1 chain}, $\partial u = N_{n,G} \cdot p(\alpha_\ast(i_\ast([M])))$. Furthermore, we have $\lvert u \rvert \leq N_{n,G} \cdot \Delta(M)$.

Step~(\ref{step:4}) contributes only a linear bound for the complexity if $W$. In this step~(\ref{step:4}), we construct a cobordism $W_{2k-1}$ over $BG$ between $M \to BG^{(2k-1)}$ induced by the given representation of $\pi(M) \to G$ and $M_{d-1} \to BG^{(2k-2)}$. Consider the $\lvert G \rvert$ copies of $M \times [0,1]$. Notice that, by our construction of the covering of colored polyhedra, we obtain the $2k$-chain $u$ such that $\d u = \lvert G \rvert \cdot \alpha_{\ast}([M])$. For a $2k$-cell $e^{2k}_\alpha$ of $BG$, we say the characteristic map is given by $\varphi_\alpha\colon D^{2k}_\alpha\to BG^{(2k)}$, where $D^{2k}_\alpha$ is a $2k$-disk. One may assume that the center of each $2k$-cell of $BG$ is a regular value of $\varphi\colon M\to BG^{(2k)}$ and a regular value of each attaching map $\varphi_\alpha|_{\partial D^{2k}_\alpha}\colon \partial D^{2k}_\alpha \to BG^{(2k)}$. Given the $2k$-chain $u=-\sum_\alpha n_\alpha^{\vphantom{2k}}e_\alpha^{2k}$, we first consider the disjoint union $\bigsqcup_{\lvert G \rvert} (M\times[0,1]) \sqcup \bigsqcup_\alpha n^{\vphantom{1}}_\alpha D^{2k}_\alpha$, together with the projection $M\times[0,1]\to M$ and the maps~$\varphi_\alpha$. The relation $\d u = \lvert G \rvert \cdot \alpha_{\ast}([M])$ gives that for the center $y$ of each $d$-cell of $BG$, the points in the preimage of $y$ in $\bigsqcup_{\lvert G \rvert} M \times \{1\}$ and $\bigsqcup_\alpha n^{\vphantom{1}}_\alpha \d D^{d+1}_\alpha$ signed by the local degree are cancelled in pairs. For each cancelling pair, we can attach a $1$-handle joining these to the disjoint union with a corresponding orientation with respect to $M$ since the attaching $0$-sphere can be framed by pulling back a fixed framing at the regular value~$y$. We denote the resulting oriented cobordism by $W_{2k-1}$. Notice that the image of the new boundary $M_{2k-2}$ is disjoint from the centers of $(2k-1)$-cells in~$BG^{(2k-1)}$. Using a homotopy on a collar neighborhood, one can obtain that $M_{2k-2}$ is over~$BG^{(2k-2)}$. The number of $1$-handles depends linearly on the complexity of $u$ and $M$. Also, $c(u)$ linearly depends on $c(M)$ as we observed in the step~(\ref{step:2}). The subdivision obtaining a triangulation of $M_{d-1}$ results in a complexity which is linearly dependent on $c(M)$. This step contributes only a multiplicative coefficient that depends only on the dimension $2k-1$.

Now we explain step~(\ref{step:5}) and why it contributes only a linear increase in the complexity of $W$. From the previous step~(\ref{step:4}), we have a cobordism $W_{2k-1}$ over $BG$ between $M \to BG^{(2k-1)}$ induced by the given representation of $\pi(M) \to G$ and $M_{2k-2} \to BG^{(2k-2)}$. From the construction, the bottom map is actually $M_{2k-2} \to u^{(2k-2)} \subset BG^{(2k-2)}$. Since we explicitly constructed $u$ as a $G$-colored polyhedron endowed with a vertex set of group elements, there exists a simplicial cylinder structure from $u$ to $E$, where $E$ is the corresponding degenerate $2k$-chain as described in Remark~\ref{rmk:simplicial cylinder}. Then, for each $i$-simplex $\sigma$ $i=2k-2, 2k-1, \cdots 2, 1, 0$ in $u^{(2k-2)}$, there exists an $(i+1)$-chain $\Psi(\sigma)$ such that $\d \Psi(\sigma) + \Psi(\d \sigma) = \sigma - [e,e,\cdots,e]$ where $[e,e,\cdots,e]$ is the degenerate $i$-simplex. By PL-transversality of the simplicial-cellular map $\varphi : M_{2k-2} \to u^{(2k-2)} \subset BG^{(2k-2)}$, we can assume that $\varphi^{-1}(c_\sigma)$ is a closed manifold and $\varphi^{-1}(b_\tau)$ is a manifold with boundary such that
\[
\Sigma_{\sigma \in u^{(2k-2)}} \sigma \times \varphi^{-1}(c_\sigma) = \Sigma_{\sigma \in u^{(2k-2)}} \d \Psi (\sigma) \times \varphi^{-1}(c_\sigma) + \Sigma_{\tau \in u^{(2k-3)}} \Psi (\tau) \times \d \varphi^{-1}(b_\tau),
\]
where $c_\sigma$ is the center of $\sigma$ and $b_\tau$ is a wedge sum of edges connecting $c_{\tau}$ to each center of $(2k-2)$-simplex having $\tau$ as a face. Then we can simplicially attach 
\[
\Sigma_{\sigma \in u^{2k-2}} \Psi (\sigma) \times \varphi^{-1}(c_\sigma) + \Sigma_{\tau \in u^{2k-3}} \Psi (\tau) \times \varphi^{-1}(b_\tau)
\]
endowed with projections
\[
\Psi (\sigma) \times \varphi^{-1}(c_\sigma) \to \Psi (\sigma)
\]
and
\[
\Psi (\tau) \times \varphi^{-1}(b_\tau) \to \Psi (\tau)
\]
for each $\sigma$ and each $\tau$ to $M_{2k-2} \to u^{(2k-2)}$ along with
\[
\Sigma_{\sigma \in u^{(2k-2)}} \sigma \times \varphi^{-1}(c_\sigma).
\]
Then we can obtain a cobordism $W_{2k-2}$ over $BG$ between $M_{2k-2} \to BG^{(2k-2)}$ and $M_{2k-3} \to u^{(2k-3)} \subset BG^{(2k-3)}$. In this process, the subdivision we used depends on only dimension, the cylinder structure $P$ depends on only dimension, and the complexity of manifolds $\varphi^{-1}(c_\sigma)$ and $\varphi^{-1}(b_\tau)$ linearly depends on $M_{2k-2}$ whose complexity linearly depends on the complexity of $M$. Backward-inductively we can obtain a chain of cobordisms $W_i$ between over $BG$ between $M_{i} \to u^{(i)} \subset BG^{(i)}$ and $M_{i-1} \to u^{(i-1)} \subset BG^{(i-1)}$, where $i = 2k-3. 2k-4, \cdots, 2, 1, 0$. Also the complexity of each $W_i$ linearly depends on $M$. Thus the step~(\ref{step:5}) contributes only a linear bound for the complexity if $W$. (See~\cite{CL} for a fuller development of this idea.)

Since the complexity of gluing cobordisms is given by addition of their complexity, step~(\ref{step:6}) gives only a sum of complexity of every step. Using the cobordisms $W_{i}$ for $i=2k-2, 2k-3, \cdots, 2, 1$, we can glue them together to construct a cobordism $W$ over $G$ between $\sqcup M \to BG^{(2k-1)}$ and $N_0 \to BG^{(0)}$. As $BG$ is connected, $N_0 \to BG^{(0)}$ is homotopic to a constant map. We can modify the map $W \to BG$ on a collar neighborhood of $N_0$ using this homotopy to obtain a desired bordism over $G$ between $\sqcup M$ and a trivial end. Since adding manifolds together only adds complexity to the resulting cobordism, we have $\Delta (W) \leq C_{2k-1} \cdot N_{n,G} \cdot \Delta (M)$, where $C_{2k-1}$ is a constant depending only on the dimension $2k-1$.

Using the fact that $\Delta (\Tilde{W}) \leq \lvert G \rvert \cdot C_{2k-1} \cdot N_{n,G} \cdot \Delta (M)$, we can now apply Definition~\ref{def:ASrho} to obtain the following:
\begin{align*}
\lvert \rho_{g}(M) \rvert &=\lvert \frac{1}{N_{n,G}} \sign_{G}(g, \Tilde{W}) \rvert\\
 &\leq \frac{1}{N_{n,G}} \rank{H_{2k}(\Tilde{W})}\\
 &\leq \frac{1}{N_{n,G}} \frac{\binom{2k+1}{k+1}}{k+1} \cdot \lvert G \rvert \cdot C_{2k-1} \cdot N_{n,G} \cdot \Delta (M)\\
 &= \frac{\binom{2k}{k}}{k+1} \cdot \lvert G \rvert \cdot C_{2k-1} \cdot \Delta (M)
\end{align*}
\end{proof}

\section{Quantitative relative hyperbolization}\label{sec:hyperbolization}

In this section, we quantitatively study relative hyperbolization~\cite{DJW} and give the proof of Theorem~\ref{thm:CG}. We remark that these methods also lead to a proof of Theorem~\ref{thm:AS} with a constant that depends on the group $G$ but does not require the combinatorial work we did on $G$-colored polyhedra.

Following and geometrizing an idea of~\cite{Cha16} and~\cite{Lim}, the main theorem follows directly from the Cheeger-Gromov Atiyah-Patodi-Singer theorem (or alternatively from the topological definition of $\rho$-invariant given in~\cite{CW}) and the following:

\begin{lemma}\label{lem:geometric}
Given a PL $n$-manifold $M$, there is a cobordism $W$ from $M$ to $M$, and a quotient group $\Gamma$ of $\pi_1(W)$, so that (1) One component of $M$ has $\pi_1$ mapping trivially to $\Gamma$, (2) the other component is mapped injectively, and (3) the number of simplices in $W$ is linearly bounded by the number of simplices in $M$.
\end{lemma}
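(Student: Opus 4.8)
The plan is to build $W$ from the Baumslag--Dyer--Heller acyclic container of $\pi:=\pi_1(M)$, the controlled ``bordism from a chain'' construction already used to prove Theorem~\ref{thm:AS}, and relative hyperbolization~\cite{DJW} to repair $\pi_1$-injectivity, and then to read off $\Gamma$ from a homomorphism to the container that kills one free factor. We may assume $M$ connected. \emph{First}, embed $\pi\hookrightarrow A$ into a BDH acyclic group~\cite{BDH} and put $Y:=BA$. Since $A$ is acyclic, $\widetilde H_\ast(Y)=0$; crucially, the controlled chain null-homotopy of the container (following~\cite{Cha16} and~\cite{Lim}) provides a contracting chain homotopy of $C_\ast(Y)$ whose norm in degrees $\le n+1$ is bounded by a constant depending only on $n$, not on $\pi$. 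Composing the classifying map $M\to B\pi\to Y$ with the $1$-controlled simplicial--cellular approximation and pushing forward the fundamental cycle $[M]$ gives a cycle in $C_n(Y)$; applying the controlled contraction yields $u\in C_{n+1}(Y)$ with $\partial u$ the image of $[M]$ and $|u|\le C_n\,\Delta(M)$. Note no multiple of $M$ is needed here, as $H_n(Y)=0$ with $\Z$-coefficients.

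\emph{Next}, run the handle-attachment/reduction procedure of~\cite{Cha16} and~\cite{CL} exactly as in the proof of Theorem~\ref{thm:AS}, now with $Y$ in place of $BG$ and with the chain $u$: this produces a PL cobordism $W_0$ over $Y$, with $\Delta(W_0)\le C_n\,\Delta(M)$, whose top boundary is $M$ carrying its classifying map (inducing $\pi\hookrightarrow A$ on $\pi_1$) and whose bottom boundary is a closed $n$-manifold $N$ mapped to the $0$-skeleton of $Y$, hence to a point, so that $\pi_1(N)\to A$ is trivial. Since this reduction need not keep $\pi_1$ of $\partial W_0$ injective, replace $W_0$ by its relative hyperbolization $V:=\mathfrak h(W_0\ \mathrm{rel}\ \partial W_0)$. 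By the quantitative form of relative hyperbolization, $V$ is again a PL cobordism from $M$ to $N$ with $\Delta(V)\le C_n'\,\Delta(M)$, the $\pi_1$ of each boundary component now injects into $\pi_1(V)$, and the natural map $V\to W_0\to Y$ still restricts on $\pi_1(M)$ to $\pi\hookrightarrow A$ and on $\pi_1(N)$ to the trivial map.

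\emph{Finally}, let $W:=V\cup_N V'$, where $V'$ is a second copy of $V$ glued along $N$; then $W$ is a PL cobordism from $M$ to $M$ with $\Delta(W)\le 2C_n'\,\Delta(M)$, which is (3). Since $\pi_1(N)$ injects into $\pi_1(V)$ and $\pi_1(V')$, van Kampen gives an honest amalgam $\pi_1(W)=\pi_1(V)\ast_{\pi_1(N)}\pi_1(V')$, so writing $M_0,M_1$ for the two boundary copies of $M$ we get $\pi_1(M_0)\hookrightarrow\pi_1(V)\hookrightarrow\pi_1(W)$ and $\pi_1(M_1)\hookrightarrow\pi_1(V')\hookrightarrow\pi_1(W)$. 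As both $\pi_1(V)\to A$ and $\pi_1(V')\to A$ vanish on $\pi_1(N)$, the universal property of the amalgam yields a homomorphism $\phi\colon\pi_1(W)\to A$ that is trivial on $\pi_1(V)$ and agrees with the natural map on $\pi_1(V')$. Set $\Gamma:=\phi(\pi_1(W))$, a quotient of $\pi_1(W)$. Then $\phi$ kills $\pi_1(M_0)$, giving (1), while $\phi|_{\pi_1(M_1)}$ is the composite $\pi_1(M)\hookrightarrow\pi_1(V')\to A$, i.e.\ the original embedding $\pi\hookrightarrow A$, which is injective, giving (2).

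The hard part is the \emph{linearity} of the bound. It rests on the fact that the contracting chain homotopy of $C_\ast(BA)$ in degrees $\le n+1$ can be taken with norm bounded by a dimensional constant independent of $\pi$ --- the quantitative core of the BDH container argument --- together with a linear simplicial bound for relative hyperbolization; granting these two inputs, the reduction machinery of~\cite{Cha16} and~\cite{CL} propagates the linear bound, and the group-theoretic bookkeeping that secures (1) and (2) is purely formal. A minor point to watch is that one must glue in the hyperbolized pieces (not $W_0$ itself) before doubling, so that $\pi_1(N)$ really injects into each factor and the amalgam --- hence the injectivity in (2) --- is valid.
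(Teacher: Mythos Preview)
Your approach is correct but far more elaborate than the paper's. The paper's proof is essentially a one-line construction: take $W=J(M\times[0,1],\,M\times\{0\}\sqcup M\times\{1\})$, the relative hyperbolization of the product cobordism (equivalently, hyperbolize the suspension $SM$ and delete the two cone-point stars), and set $\Gamma=\pi_1\bigl(W\cup C(M\times\{1\})\bigr)$. The $\pi_1$-injectivity of both boundary components into $\pi_1(W)$ is the standard property of relative hyperbolization; the copy $M\times\{1\}$ dies in $\Gamma$ because it is the link of the cone point just filled in, while $M\times\{0\}$ still injects because it is the link of the \emph{remaining} singularity in $H(SM)$. The linear bound is immediate since $W$ sits inside $H(SM)$, whose simplex count is a dimensional constant times $\Delta(M)$.

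By contrast, you first manufacture a nontrivial bordism $W_0$ from the BDH acyclic container plus the controlled chain contraction of~\cite{Cha16,Lim} and the handle-reduction machinery, then hyperbolize $W_0$ rel boundary, then double along $N$, and finally assemble $\phi$ from the amalgam. All of this works, but none of it is needed: the entire role of the acyclic container and the chain $u$ in your argument is to produce a bordism whose ``other end'' maps trivially to some target group, and the paper achieves exactly that by the much cheaper device of \emph{coning off} one boundary component of the hyperbolized cylinder. In effect you are carrying the algebraic Cha--Lim machinery through and then repairing it with hyperbolization, whereas the paper's point is precisely that hyperbolization alone already does the whole job. Your route does confirm that the container argument can be made to land here, but it buys nothing extra and obscures the fact that the lemma is really a corollary of relative hyperbolization applied to $M\times I$.
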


Recall that $\rho^{(2)}_{\Gamma}(M)$ is the difference between the $L^2$-signature of the $\Gamma$-cover of $W$ and its ordinary signature, as defined in Definition~\ref{def:CWrho}. Since the number of simplices in $W$ clearly bounds the signatures, it also bounds $\rho^{(2)}_{\Gamma}(M)$.

Our approach to construct the cobordism $W$ is based on the relative hyperbolization technique of Davis, Januszkiewicz, and Weinberger~\cite{DJW}. The construction of $W$ involves replacing each simplex in the barycentric subdivision of a cone $CM$ with a $\pi_1$-injective hyperbolized \enquote{simplex}, followed by the removal of a singularity at the cone point.

We begin by recalling key definitions and the Williams construction~\cite{Williams}, which is essential in our construction of hyperbolized $n$-simplices~\cite{DJW}. Let $\Delta^n$ denote the standard $n$-simplex.

\begin{definition}
Let $X$ be a topological space and $f:X \to \Delta^n$ be a continuous map. We say that the pair $(X,f)$ is a {\em space over the $n$-simplex}.
\end{definition}

\begin{definition}
If $L$ is an abstract simplicial complex and $g:L \to \Delta^n$ is a nondegenerate simplicial map, then the pair $(L, g)$ is called a {\em simplicial complex over $\Delta^n$}. Let both pairs $(L_1, g_1)$ and $(L_2, g_2)$ be simplicial complexes over $\Delta^n$. We call a nondegenerate simplicial map $g:L_1 \to L_2$ a {\em map over $\Delta^n$}, provided that the following diagram commutes:

\adjustbox{scale=1.3,center}{%
\begin{tikzcd}
L_1 \arrow[rd, "g_1"'] \arrow[rr, "g"] &   & L_2 \arrow[ld, "g_2"] \\
                                   & \Delta^n  &
\end{tikzcd}
}
We denote the category of simplicial complexes over $\Delta^n$ and maps over $\Delta^n$ by $\mathcal{K}(\Delta^n)$.
\end{definition}

\begin{remark}\label{rmk:derived}
The barycentric subdivision can be understood as a functor from the category of simplicial complexes and nondegenerate simplicial maps to $\mathcal{K}(\Delta^n)$. For an arbitrary abstract simplicial complex over $\Delta^n$ of dimension $n$, one can define a degree map $d:L \to \{0, 1, 2, \cdots, n\}$ which assigns to each simplex its dimension. Since the poset of nonempty subsets of $\{0, 1, 2, \cdots, n\}$ describes $\Delta^n$, the pair $(L', d)$ naturally becomes a simplicial complex over $\Delta^n$ where $L'$ is the barycentric subdivision of $L$. Furthermore, for a nondegenerate simplicial map $g:K \to L$ of $n$-dimensional complexes,
$g':K' \to L'$ is a map over $\Delta^n$. 
\end{remark}

We are now prepared to introduce the Williams construction. In essence, this procedure entails substituting each simplex in a given simplicial complex $L$ with a distinctive \enquote{simplex} $X$.

\begin{definition}\label{def:Williams}
\textbf{\textup{(Williams construction)}}
Let $(X, f)$ be a space over $\Delta^n$ and $(L, g)$ be a simplicial complex over $\Delta^n$. We define the \textit{fibered product} $X \widetilde{\triangle} L$ as the subspace of $X \times \mid L \mid$ consisting of all pairs $(x, y)$ such that $f(x)=g(y)$. Here, $\mid L \mid$ denotes the geometric realization of $L$. We use $f_L$ and $p$ to denote the natural projections:
\[
\adjustbox{scale=1.3,center}{%
\begin{tikzcd}
X \widetilde{\triangle} L \arrow[r, "f_L"] \arrow[d, "p"]
& \mid L \mid \arrow[d, "g"] \\
X \arrow[r, "f"]
& \Delta^n
\end{tikzcd}
}
\]
Suppose $L$ is an $n$-dimensional simplicial complex. Then, we define $X \triangle L$ as $X \widetilde{\triangle} L'$, where $L'$ denotes the barycentric subdivision of $L$, which is a simplicial complex over $\Delta^n$ as discussed in Remark~\ref{rmk:derived}.
\end{definition}

\begin{remark}\label{rmk:functor}
It is known that $X \widetilde{\triangle} - : \mathcal{K}(\Delta^n) \to Top$ is a functor from the category of simplicial complexes over $\Delta^n$ to the category of topological spaces~\cite{DJ}. By Remark~\ref{rmk:derived}, $X \triangle - : \mathcal{K}(\Delta^n) \to Top$ is also functorial.
\end{remark}

We define a hyperbolized \enquote{simplex} $(X,f)$, which will be used to replace each simplex in a simplicial complex $L$ for the purpose of hyperbolizing $L$.

\begin{definition}\label{def:hyperbolized simplex}
\textbf{\textup{(Hyperbolized $n$-simplex)}}
Let $(X,f)$ be a space over $\Delta^n$, where $X$ is a compact $n$-dimensional PL manifold with boundary and $f:X \to \Delta^n$ is piecewise linear such that the boundary operation is preserved under $f^{-1}$. That is, $\d (X_{\alpha})=X_{\d \alpha}$, where $\alpha$ is any $k$-dimensional face of $\Delta^n$ and $X_{\alpha}$ is $f^{-1}(\alpha)$, which is a $k$-dimensional PL submanifold of $\d X$. We say that $(X, f)$ is a {\em hyperbolized $n$-simplex}, provided that $X$ is a geodesic space of curvature less than or equal to $0$ and the subspace $X_J$ is totally geodesic for any connected subcomplex $J$ of $\Delta^n$, 
\end{definition}

\begin{remark}
For the application to a weak version of Theorem~\ref{thm:AS} (i.e. without any estimate on how constants depend on $G$), one can use cobordisms of multiples of $G$-labelled simplices to (the same multiple of) simplices that are trivially labelled. (Such cobordisms can be inferred from cobordism theory and do not need to be explicitly constructed.)
\end{remark}

\begin{remark}\label{rmk:hyperbolized simplex}
We construct hyperbolized $n$-simplices $(X^n, f)$ in every dimension $n$ using the Gromov's construction, following an inductive procedure. We start by taking $X^1$ to be the interval $\Delta^1$ together with the canonical map $f:X^1 \to \Delta^1$. Define a hyperbolized $n$-sphere $Y^n$ to be $X^n \triangle (\partial \Delta^{n+1})$, using the Williams construction. Consider an isometric reflection $r$ on $\Delta^{n+1}$. Then there is a half space $A \subset \Delta^{n+1}$ such that $\Delta^{n+1} = A \sqcup B \sqcup r(A)$ where $B=A \cap r(A)$. Now we define $X^{n+1} := Y^n \times [0,1] / \sim$ with the equivalence relation which is given by $r(a) \times \{1\} \sim r(a) \times \{0\}$ where $a \in A$. (Then, the boundary of $X^{n+1}$ is $\bigcup_{i=0,1} A \times \{i\} / \sim$ which is homeomorphic to $Y^n$.) Since $\Delta^{n+1}$ is the cone on $\d \Delta^{n}$, we can extend the nature map $\d X^{n+1} = Y^n \to \d \Delta^{n+1}$ to $f : X^{n+1} \to \Delta^{n+1}$, choosing a collared neighborhood of $\d X^{n+1}$ in $X^{n+1}$. Then, by construction, $(X^{n+1}, f)$ is a hyperbolized $n$-simplex. Notice that each face of the hyperbolized $n$-simplex is the hyperbolized $(n-1)$-simplices. For details we refer readers to~\cite[Section 4]{DJ}. This construction explicitly gives the number of $n$-simplices in a triangulation of $X^n$, which is $z(n):=3^{n-1} \cdot  n! \cdot  (n-1)!^2 \cdot  (n-2)!^2 \cdots (3!)^2 \cdot 2!$.
\end{remark}

By replacing each simplex in a simplicial complex with a hyperbolized simplex, we obtain a hyperbolization. In this paper, we use the hyperbolized simplices constructed in Remark~\ref{rmk:hyperbolized simplex}.

\begin{definition}
\textbf{\textup{(Hyperbolization)}} Given an $n$-dimensional simplicial complex $K$, and a hyperbolized $n$-simplex $(X^n, f)$ constructed as in Remark~\ref{rmk:hyperbolized simplex}, the {\em hyperbolization of $K$} is defined as $H(K) = X^n \triangle K$.
\end{definition}

\begin{figure*}
\begin{center}
\tikzset{->-/.style n args={2}{decoration={
  markings,
  mark=at position #1 with {\arrow[line width=1pt]{#2}}},postaction={decorate}}}
\begin{tikzpicture}[scale=0.6, bullet/.style={circle,inner sep=1.5pt,fill}]
 
 \begin{scope}
  \path
   (-3,4) node[label=below:](B){}
   (0,3) node[label=below:](C){}
   (-3,1) node[label=below:](E){}
   (0,0) node[label=below:](F){}
   (3,4) node[label=below:](G){}
   (3,1) node[label=below:](I){}
   
   (5,4) node[label=below:](BB){}
   (8,3) node[label=below:](CC){}
   (5,1) node[label=below:](EE){}
   (8,0) node[label=below:](FF){}
   (11,4) node[label=below:](GG){}
   (11,1) node[label=below:](II){}
 
   (13,2.5) node[label=below:, orange](JJ){$\cdots$}
   
   (-6,-2) node[bullet,label=above:](A2){}
   (0,-4) node[bullet,label=below:](B2){}
   (6,-6) node[bullet,label=below:](C2){}
   (12,-4) node[bullet,label=below:](G2){}
   (18,-2) node[bullet,label=below:](H2){}
   (6,-2) node[bullet,label=below:](K2){}
   
   (6,-3.5) node[bullet,label=below:](J2){}
   ;
   {\fill[orange!10!] (-3,4) -- (0,3) -- (3,4) -- cycle;}
   {\fill[orange!20!] (-3,4) to [bend left=30] (-3,1) -- (0,0) -- (3,1) to [bend left=30] (3,4) -- (0,3) -- (-3,4);}
   
   {\draw [Orange,fill] (-3,4) circle [radius=0.07];}
   {\draw [Orange,fill] (0,3) circle [radius=0.07];}
   {\draw [Orange,fill] (-3,1) circle [radius=0.07];}
   {\draw [Orange,fill] (0,0) circle [radius=0.07];}
   {\draw [Orange,fill] (3,4) circle [radius=0.07];}
   {\draw [Orange,fill] (3,1) circle [radius=0.07];}
   
   {\draw [-,thick, orange] (B) to [bend left=30] (E);;}
   {\draw [-,thick, orange] (G) to [bend right=30] (I);;}
   {\draw [-,thick, orange] (C) to [bend left=10] (F);;}
   
   {\draw[line width=1pt, orange] (B) -- 
   node [text width=2.5cm,midway,below=0.1em,align=center ] {} (C);}
   {\draw[line width=1pt, orange] (E) --
   node [text width=2.5cm,midway,below=0.1em,align=center ] {} (F);}
   {\draw[line width=1pt, orange] (C) -- 
   node [text width=2.5cm,midway,below=0.1em,align=center ] {} (G);}
   {\draw[line width=1pt, orange] (F) --
   node [text width=2.5cm,midway,below=0.1em,align=center ] {} (I);}
   {\draw[line width=1pt, orange] (B) --
   node [text width=0.3cm,midway,below=0.5em,above=0.1em,align=center ] {} (G);}
   {\draw[dotted,line width=1pt, orange] (E) --
   node [text width=2.5cm,midway,above=0.1em,align=center ] {} (I);}

   {\fill[orange!10!] (5,4) -- (8,3) -- (11,4) -- cycle;}
   {\fill[orange!20!] (5,4) to [bend left=30] (5,1) -- (8,0) -- (11,1) to [bend left=30] (11,4) -- (8,3) -- (5,4);}

   {\draw [Orange,fill] (5,4) circle [radius=0.07];}
   {\draw [Orange,fill] (8,3) circle [radius=0.07];}
   {\draw [Orange,fill] (5,1) circle [radius=0.07];}
   {\draw [Orange,fill] (8,0) circle [radius=0.07];}
   {\draw [Orange,fill] (11,4) circle [radius=0.07];}
   {\draw [Orange,fill] (11,1) circle [radius=0.07];}
   
   {\draw [-,thick, orange] (BB) to [bend left=30] (EE);;}
   {\draw [-,thick, orange] (GG) to [bend right=30] (II);;}
   {\draw [-,thick, orange] (CC) to [bend left=10] (FF);;}
   
   {\draw[line width=1pt, orange] (BB) -- 
   node [text width=2.5cm,midway,below=0.1em,align=center ] {} (CC);}
   {\draw[line width=1pt, orange] (EE) --
   node [text width=2.5cm,midway,below=0.1em,align=center ] {} (FF);}
   {\draw[line width=1pt, orange] (CC) -- 
   node [text width=2.5cm,midway,below=0.1em,align=center ] {} (GG);}
   {\draw[line width=1pt, orange] (FF) --
   node [text width=2.5cm,midway,below=0.1em,align=center ] {} (II);}
   {\draw[line width=1pt, orange] (BB) --
   node [text width=0.3cm,midway,below=0.5em,above=0.1em,align=center ] {} (GG);}
   {\draw[dotted,line width=1pt, orange] (EE) --
   node [text width=2.5cm,midway,above=0.1em,align=center ] {} (II);}

   {\draw[fill=black!10!] (-6,-2) -- (18,-2) -- (6,-6) -- cycle;}
   
   {\draw [black,fill] (-6,-2) circle [radius=0.07];}
   {\draw [black,fill] (0,-4) circle [radius=0.07];}
   {\draw [black,fill] (6,-6) circle [radius=0.07];}

   {\draw [black,fill] (12,-4) circle [radius=0.07];}
   {\draw [black,fill] (18,-2) circle [radius=0.07];}
 
   {\draw [black,fill] (6,-2) circle [radius=0.07];}
   
   {\draw[line width=1pt] (A2) -- 
   node [text width=2.5cm,midway,above=0.1em,align=center ] {} (B2);}
   {\draw[line width=1pt] (B2) -- 
   node [text width=2.5cm,midway,above=0.1em,align=center ] {} (C2);}

   {\draw[line width=1pt] (C2) -- 
   node [text width=2.5cm,midway,above=0.1em,align=center ] {} (G2);}
   {\draw[line width=1pt] (G2) -- 
   node [text width=2.5cm,midway,above=0.1em,align=center ] {} (H2);}

   {\draw[line width=1pt] (A2) --
   node [text width=2.5cm,midway,above=0.1em,align=center ] {} (K2);}
   {\draw[line width=1pt] (K2) --
   node [text width=,midway,above=0.1em,align=center ] {} (H2);}
   
   {\draw[line width=1pt] (B2) --
   node [text width=2.5cm,midway,above=0.05em,align=center ] {} (H2);}  
   {\draw[line width=1pt] (K2) --
   node [text width=2.5cm,midway,above=0.1em,align=center ] {} (C2);}
   {\draw[line width=1pt] (A2) --
   node [text width=0.3cm,midway,below=0.5em,above=0.1em,align=center ] {} (G2);}
   
   {\draw [->-={0.5}{latex}, thick, dotted, blue] (E) to [bend left=30] (A2);;}
   {\draw [->-={0.5}{latex}, thick, dotted, blue] (F) to [bend right=30] (B2);;}
   {\draw [->-={0.5}{latex}, thick, dotted, blue] (I) to [bend right=30] (J2);;}

   {\draw [->-={0.5}{latex}, thick, dotted, blue] (EE) to [bend left=30] (A2);;}
   {\draw [->-={0.5}{latex}, thick, dotted, blue] (FF) to [bend left=0] (J2);;}
   {\draw [->-={0.5}{latex}, thick, dotted, blue] (II) to [bend left=0] (K2);;}

\end{scope} 
\end{tikzpicture} 
\end{center}\caption{The hyperbolization process is to fiber-wisely replace an abstract simplex in a barycentric subdivision of a simplicial complex by a hyperbolized simplex. This picture describes $H(\sigma)=X^2 \triangle \sigma =X^2 \widetilde{\triangle} \sigma'$ where $\sigma$ is an abstract $2$-simplex. The hyperbolic $2$-simplex is colored orange, while the barycentric subdivision of $\sigma$ is colored gray.}\label{figure:hyperbolization}
\end{figure*}

We define the concept of relative hyperbolization. 

\begin{definition}
\textbf{\textup{(Relative hyperbolization)}}
Let $K$ be a simplicial complex and $L=\sqcup L_i$ be a subcomplex of $K$ where $\{L_i\}$ be the set of path components of $L$. By attaching a cone $CL_i$ to $K$ on each $L_i$, we obtain the simplicial complex $K \cup CL$. We denote the cone point corresponding to $L_i$ by $l_i$. By hyperbolizing $K \cup CL$, we obtain $H(K \cup CL)$. The link of $l_i$ in $H(K \cup CL)$, denoted by $lk_i$, can be identified with a subdivision of $L_i$.

We define the {\em relative hyperbolization of $K$ with respect to $L$} as the space $J(K,L)$ obtained by removing a small open conical neighborhood of each $l_i$ from $H(K \cup CL)$. Since the boundary of such a neighborhood is $lk_i (= L_i)$, we have found a subspace of $J(K,L)$ which comprises a copy of $L$.
\end{definition}

We now show Lemma~\ref{lem:geometric}.

\begin{proof}[Proof of Lemma~\ref{lem:geometric}]
Let $M$ be a PL manifold $M$. Consider $M \times [0,1]$. Let $L_0$ and $L_1$ be the boundary components $M \times \{0\}$ and $M \times \{1\}$ respectively. Denote $L = L_0 \sqcup L_1$. Now consider a hyperbolization $H(M \times [0,1] \cup CL)$. Let $W$ be the relative hyperbolization $J(M \times [0, 1], L)$. Since we obtain $W$ by removing small open conical neighborhoods of $l_0$ and $l_1$ which are cone points of $M \times \{0\}$ and $M \times \{1\}$ respectively, the corresponding boundaries $lk_0$ and $lk_1$ are homeomorphic to $M \times \{0\}$ and $M \times \{1\}$ respectively. Now, let $\Gamma$ be $\pi_1(W \cup Clk_1)$.

Since the universal cover $\widetilde{H}(M \times [0,1], CL)$ of $H(M \times [0,1], CL)$ is a $\CAT(0)$ space (see~\cite[4c.2]{DJ}), there is a deformation retraction of $\widetilde{H}(M \times [0,1], CL) - \widetilde{l_i}$ onto $\widetilde{L_i}$ for each $i=0,1$ which is given by a geodesic contraction (see~\cite[Lemma 2.2]{DJW}). Notice that the restriction of the deformation retraction to $\widetilde{W}=\widetilde{J}(M \times [0,1],L)$ is a $\pi_1$-injective map. Hence $\pi_1(lk_i) \to \pi_1(W)$ is injective for each $i=0,1$.

Now consider $W \cup Clk_1$ obtained by attaching the cone $Clk_1$ of $lk_1$ to the bottom boundary $lk_1$ of $W$. Then $\Gamma = \pi_1(W \cup Clk_1)$ is a quotient group of $\pi_1(W)$. i.e. $\pi_1(lk_1) \to \Gamma$ is the zero map. We show $\pi_1(lk_0) \to \Gamma$ is injective. For a contradiction, assume that there are nontrivial elements $[\alpha] \in \pi_1(lk_0)$ and $[\beta] \in \pi_1(lk_1)$ such that $[\alpha]=[\beta]$ in $\pi_1(W)$. By the construction of $W$, this means that there are nontrivial elements $[\alpha'] \in \pi_1(H(M \times \{0\}))$ and $[\beta'] \in \pi_1(H(M \times \{1\}))$ such that $[\alpha']=[\beta']$ in $\pi_1(H(M \times [0,1]))$. In other words, there is a cylinder $N$ in $H(M \times [0,1])$ between $\alpha'$ and $\beta'$. Since the canonical projection $p:H(M \times [0,1]) \to M \times [0,1]$ is continuous, $p(N)$ is a cylinder in $M \times [0,1]$ between the two loops $p(\alpha')$ in $M \times \{0\}$ and $p(\beta')$ in $M \times \{1\}$. Recall that the hyperbolization replaces each $k$-simplex in the given triangulation of $M \times [0,1]$ with the hyperbolized $k$-simplex in Remark~\ref{rmk:hyperbolized simplex} for $k=1,2,\cdots,n+1$ (see~\cite[Section 4]{DJ}). Specially $1$-simplices in $M$ are remained in $H(M)$ in the hyperbolization. By the canonical projection in the Gromov's construction, we obtain $N \subset H(p(N)) \subset H(M \times [0,1])$. Since $H(p(N))$ is homeomorphic to a two-punctured $g$-torus that $\alpha'$ and $\beta'$ represent the two non-homotopic loops of the punctured parts, it is a contradiction to that $[\alpha']=[\beta']$ in $\pi_1(H(M \times [0,1]))$. Then, $\pi_1(lk_0) \to \Gamma$ is injective.

We then obtain a cobordism $W$ between two boundary components $lk_0$ and $lk_1$ which are homeomorphic to $M$, and a commutative diagram below:
\[
\adjustbox{scale=1.3, center}{%
\begin{tikzcd}
\pi_1(lk_0) \arrow[rd, hook, "i_{0\ast}"] \arrow[rrd, hook, "i_{W\ast} \circ i_{0\ast}"] & & \\
& \pi_1(W) \arrow[r, "i_{W\ast}"] & \Gamma=\pi_1(W \cup Clk_1) \\
\pi_1(lk_1) \arrow[ru, hook, "i_{1\ast}"] \arrow[rru, "\times 0" ']
&  &
\end{tikzcd}
}
\]
where $i_{0\ast}$, $i_{1\ast}$, and $i_{W\ast}$ are induced maps from $M \times \{0, 1\} \hookrightarrow W \hookrightarrow H(M \times [0, 1]) \cup C(M \times \{1\})$.
\end{proof}

We give the proof of Theorem~\ref{thm:CG}.

\begin{proof}[Proof of Theorem~\ref{thm:CG}]
By Lemma~\ref{lem:geometric}, $\d W=M \times \{0\} \sqcup M \times \{1\}$ over $\Gamma$. Then, the $L^2$ signature defect of $W$ is $\rho^{(2)}_{\Gamma}(M \times \{0\})-\rho^{(2)}_{\Gamma}(M \times \{1\})$. Since $M \times \{1\}$ is $\pi_1$-trivially over $\Gamma$, $\rho^{(2)}_{\Gamma}(M \times \{1\})=0$. Since the signatures of $W$ are bounded by the rank of its middle dimension homology, 
$\rho^{(2)}_{\Gamma}(M \times \{0\})$ is bounded by twice the number of the middle dimension simplices of $W$. One can obtain $W$ as a subcomplex of $H(SM)$, where $SM$ is the suspension of $M$. Then, the number of $4k$-simplices of $H(SM)$ is $2 \cdot z(4k) \cdot (4k+1)! \cdot \Delta(M)$, where $\Delta(M)$ is the number of $(4k-1)$-simplex of $M$ and $z(x)$ is the function introduced in Remark~\ref{rmk:hyperbolized simplex}. By bounding the number of $2k$-simplices of $W$, we obtain 
\[
\displaystyle{\lvert \rho^{(2)}_{\Gamma}(M) \rvert \leq \frac{2}{2k+1} \cdot {{4k+1}\choose{2k+1}} \cdot 2 \cdot z(4k) \cdot (4k+1)! \cdot \Delta(M)}.\qedhere
\] 
\end{proof}

We end this section by noting that one can extend the definition of the Cheeger-Gromov invariant regarding an arbitrary representation of $\pi_1(M)$ instead of the injection of $\pi_1(M)$ into $\Gamma$. The first author of this paper and Cha will independently show that there exists \emph{universal} linear bounds on the generalized Cheeger-Gromov invariants in a future paper~\cite{CL}.

\section{Complexity and homotopy structures of homotopy lens spaces}\label{sec:complexity}

In this section we prove Theorem~\ref{cor:CG}, Theorem~\ref{cor:AS}, and Theorem~\ref{cor:bddgeo}.

As an application of our results, we can use the new bounds for $\rho$-invariants of high-dimensional manifolds to study the complexity theory of manifolds, particularly for lens spaces and homotopy lens spaces. The complexity of a 3-manifold is defined as the smallest number of 3-simplices required to represent the manifold as a quotient space of simplices. While this definition is natural and clear, computing the complexity is not easy. For instance, it is well-known that there are only finitely many 3-manifolds whose complexity is less than or equal to a given natural number. However, classifying 3-manifolds based on complexity remains a challenging problem. Matveev~\cite{Mat} extensively studied the complexity of 3-manifolds. Lackenby and Purcell~\cite{LP} demonstrated its relationship to other key topological and geometric quantities from the geometry of the mapping class group and Teichm\"{u}ller space. Costantino~\cite{Cos06} introduced the 4-dimensional analogue of Matveev's complexity of 3-manifolds, using Turaev shadows. However, studying the complexity of high-dimensional manifolds remains a mysterious topic, and there are only a few example. Our main results on high-dimensional manifolds provide a new application in the complexity theory of high-dimensional lens spaces and homotopy lens spaces.

Our first application confirms the high-dimensional analogue of the conjecture by Matveev~\cite{Mat} and Jaco, Rubinstein, and Tillmann~\cite{JRT09} for specific high-dimensional lens spaces, up to a bounded multiplicative error which depends only on the dimension. Matveev~\cite{Mat} and Jaco, Rubinstein, and Tillmann~\cite{JRT09} independently conjectured that the complexity of $L^3(N;1)$ is $N-3$. In~\cite{JRT09}, they proved the upper bound and the case for even $N$. However, the lower bound remained a challenging open problem. Cha~\cite{Cha16} provided a lower bound for $L^3(N;1)$ by employing the linear upper bounds for $\rho$-invariants. In~\cite{LP}, Lackenby and Purcell obtained this result for general $L^3(p;q)$ in terms of continued fractions, using the geometry of the mapping class group. Theorem~\ref{cor:CG} extends the conjecture for high-dimensional lens spaces and confirms its analogue for general $L^{2d-1}(N;1,1, \cdots, 1)$, providing new examples for the complexity theory of high-dimensional lens spaces.

Now we give a proof of Theorem~\ref{cor:CG}: the number of simplices of the standard lens space $L_{N}(1,1,\cdots,1)$ of dimension $2d-1$ (with fundamental group $\Z_N$) grows (in $N$) like $N^{d-1}$, i.e. the number of simplices is bounded above and below by dimensional constants times $N^{d-1}$.

\begin{proof}[Proof of Theorem~\ref{cor:CG}]
For the upper bound, we use an inductive construction. Any $\Z_N$-quotient of $S^1$ is $S^1$. One can obtain $L^{2(d+1)-1}(N;1,1, \cdots, 1)$ as a $\Z_N$-quotient of the join of the universal cover of $L^{2d-1}(N;1,1, \cdots, 1)$ and a $N$-gon. Thus, we can inductively obtain the upper bound.

For the lower bound, we first consider the case that $d$ is even. Atiyah and Bott~\cite[Theorem 6.27]{AB} computed their $\rho$-invariant for general lens spaces in terms of $G$-signatures. For the regular representation $\alpha$ of $\pi_1(L^{2d-1}(N;1,1, \cdots, 1))=\Z_N$, the theorem gives the following:
\[
\rho_{\alpha}(L^{2d-1}(N;1,1, \cdots, 1))=\Sigma_{k=1}^{N-1}{\textup{cot}}^{d}(\frac{\pi k}{N}).
\]
For $N \geq 4$, $(\frac{1}{\pi}N)^d < \rho_{\alpha}(L^{2d-1}(N;1,1, \cdots, 1))$. Combining it with Theorem~\ref{thm:AS}, we obtain
\[
C \cdot N^{d-1} < \Delta (M)
\]
where $C$ is a constant that depends on the dimension $2d-1$.

For a given odd $d$, we obtain the lower bound by the above construction of $L^{2(d+1)-1}(N;1,1, \cdots, 1)$ as a $\Z_N$-quotient of the join of the universal cover of $L^{2d-1}(N;1,1, \cdots, 1)$ and a $N$-gon.
\end{proof}

Another application of our bounds for the Wall $\rho$-invariants is to show that the number of simplices needed for an element of the homotopy structure set of a lens space~\cite[Section 14E]{Wall99}, is approximately the same as its distance from the origin in that set, thought of as an abelian group. This is based on the connection between $\rho$-invariants and surgery, with a trick from~\cite{Wei82} being used to construct that many homotopy lens spaces with $V$ simplices, and the inequality in this paper to give an upper bound on the $\rho$-invariants of such manifolds.

A consequence of surgery theory (the Rothenberg sequence, see~\cite[Section 17]{Moore}) and the h-cobordism theorem is that the h-cobordism classes of manifolds homotopy equivalent to $M$ and the manifolds simple homotopy equivalent to $M$ are essentially the same, up to \emph{mod 2 torsion phenomena}\footnote{There is a map $S^s(L) \to S^h(L)$ with kernel and cokernel elementary abelian $2$-groups.} which are finite whenever the Whitehead group of the fundamental group is finitely generated, and in particular, for finite fundamental group.

The manifolds simple homotopy equivalent to a lens spaces, are determined by the $\rho_g$ invariants
for $g \neq e$ (note that $\rho_g$ and $\rho_{g^{-1}}$ are essentially equivalent). For $g$ an element of order $2$, there is
such an invariant in dimensions $3$ mod $4$, but not $1$ mod $4$. (In $1$ mod $4$, the bounding manifold will have a skew symmetric invariant inner product, but there are no interesting invariants of
these for automorphisms of order $2$.)

As a result, one has for $N$ odd, $[(N-1)/2]$ invariants of these manifolds. For $N$ even, there are $[(N-1)/2]$ such invariants in all odd dimensions and 1 extra in dimension 3 mod 4. This can be
summarized as $[(N-1)/2] + \delta(N,d)$ where $\delta(N,d) = 0$ unless $N$ is even and $d$ is $3$ mod $4$, in which case it is $1$. 

Now we give a proof of Theorem~\ref{cor:AS}: the number of h-cobordism classes of manifolds simple homotopy equivalent to a lens space $L_N^d$ with at most $V$ simplices is bounded between two constant multiples (depending on $d$)$\times V^{(N-1)/2 + \delta (N,d)}$ where $\delta (N,d) = 0$ unless $N$ is even and $d$ is $3$ mod $4$, and in that case it is $1$.

\begin{proof}[Proof of Theorem~\ref{cor:AS}]
    Let $M$ be a lens spaces $L_N^d$. We use the Wall realization $L_{2d}(\Z/N) \to S(M)$~\cite[Theorem 5.8]{Wall99}, which identifies all elements in $L_{2d}(\Z/N)$ as obstructions to surgery problems in $S(M)$, to obtain a $\Z^{(N-1)/2}$ number of such  manifolds, detected by the $\rho_{G}$ invariant. This $L$-group is finitely generated. Pick a codimension  one submanifold $X$ of $M$ so that $\pi_1(X) \to \pi_1(M)$ is an isomorphism. Then one can lift the map  $L_{2d}(\Z/N) \to S(M)$ through $L_{2d}(\Z/N) \to S(X \times [0,1] \textup{ rel } \d) \to S(M)$. One cuts along $X$ and glues in between the two sides the Wall realization mapping to $X \times [0,1] \textup{ rel } \d$~\cite{Wei82}. The constant for the lower bound is the reciprocal of  the largest number of simplices in any of the Wall realizations for any of the generators with a  negative additive constant for the number of simplices in a triangulation of $M$ in which $N$ is a  subcomplex. In this construction, the codimension one submanifold $X$ we use is the boundary of regular neighborhood of a lower dimensional lens space. If $d=3$, then the submanifold $X$ has dimension $4$. In that case one might have to connect sum some $S^2 \times S^2$ to achieve the Wall realization. (See~\cite{CS71}.)

    For the upper bound, Theorem~\ref{thm:AS} plays a key role. As the above discussion, we have $[(N-1)/2] + \delta(N,d)$ invariants where $\delta(N,d) = 0$ unless $N$ is even and $d$ is $3$ mod $4$, in which case it is $1$. Since we showed the upper bound of $\rho$-invariant $\rho_{g}$ linearly depends on the complexity of the given homotopy lens space in Theorem~\ref{thm:AS}, the number of h-cobordism classes of manifolds homotopy equivalent to a lens space $L_{N}^{2d-1}$ with $V$ simplices is bounded above by $V^{(N-1)/2 + \delta (N,d)}$ (in $V$) with a multiplicative coefficient which depends only on dimension.
\end{proof}

The difference between homotopy equivalence and simple homotopy equivalence is governed by the Whitehead group. For a finite group $G$, the Whitehead group $Wh(G)$ is finitely generated and detected by taking the determinants of real representations. In other words, a representation gives a map $\Z G$ to $GL_n(\C)$, and one can take the absolute value of the determinant of a matrix representing an element of the Whitehead group. An element of $Wh(G)$ is finite order if these determinants have determinant $1$ for all the (finitely many) irreducible representations of $G$. (For details, we refer readers to~\cite{Bass}.) However, when a sum of representations is rational, then the determinant will be root of unity. In other words, this abelian group has rank equal to the number of irreducible real representation minus the number of rational representations. The rational representations are in a $1-1$ correspondence with the divisors of $N$, the number of which we call $d(N)$. For $N = 2k+1$, this rank is $k+1-d(N)$ and for $N = 2k$ this is $k+1-d(N)$.

Combining the torsion and the $\rho$-invariants, the number of invariants we have for homotopy lens spaces, for $N = 2k+1$, is
thus $k+k+1-d(N) = N-d(N)$. For $N = 2k$, we obtain $k-1 + \delta + k+1 - d(N) = N - d(N) + \delta$.

Now we show Theorem~\ref{cor:bddgeo}: the number of homotopy lens spaces with bounded geometry with fundamental group $\Z/N$ in dimension $d$ and volume $V$ can be bounded above and below by constants times $V^{N-d(N)+\delta(N,d)}$.

\begin{proof}[Proof of Theorem~\ref{cor:bddgeo}]
There are only finitely many homotopy types of $d$-dimensional lens spaces with fundamental group $\Z/N$, so we are not harmed by lumping them all together. The upper bounds follow from our bounds on $\rho$ and torsions. The lower bounds follow from two analogous constructions.    

For realizing the torsions, we note that in the above notation, the subgroup of the Whitehead group $2Wh(\Z/N)$ (which is a free abelian group of the same rank as $Wh(\Z/N)$) is realized by h-cobordisms of $N$ to itself (see~\cite{Milnor66}). As before \emph{cutting and pasting} along the boundary of a regular neighborhood of the $2$-skeleton give a construction of the right number of simple homotopy types.

To get the estimate on the torsion, we use the Reidemeister torsion which detects the Whitehead torsion, modulo elements of finite order~\cite{Milnor66}:

\begin{proposition}\label{prop:diff}
    The Reidemeister torsion of a simplicial complex of bounded geometry with $S$ simplices is $\exp(O(S))$.
\end{proposition}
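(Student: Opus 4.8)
The plan is to compute the combinatorial (Reidemeister/Whitehead) torsion explicitly from the simplicial data and then estimate every determinant that occurs. First I would reduce to a bound over characters: $\Wh(\Z/N)$ is, up to a finite group, detected by the homomorphisms $\tau\mapsto\det\rho(\tau)\in\C^{\ast}$ with $\rho$ ranging over the finitely many irreducible representations of $\Z/N$, and these are one-dimensional characters $\chi$ with $\chi(1)$ a root of unity. So it suffices to bound $\lvert{\det}_{\chi}(\tau)\rvert$ for each such $\chi$, where ${\det}_{\chi}(\tau)$ is the Reidemeister torsion of the acyclic complex $C_{\ast}$ obtained by twisting the simplicial chain complex of $K$ (or of the relevant pair realizing the $h$‑cobordism) by $\chi$. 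The key observation is that the differentials $\partial_i^{\chi}$ of $C_{\ast}$ are \emph{sparse with entries of modulus one}: a column has at most $i+1$ nonzero entries (the codimension‑one faces of an $i$‑simplex), each such entry is $\pm\chi(g)$ for a single group element, and — using bounded geometry — a row has at most a uniform number $\Lambda$ of nonzero entries; moreover every matrix has size at most $S\times S$.

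Next I would invoke the Milnor–Turaev formula, which writes $\tau=\prod_{i}(\det A_i)^{(-1)^{i}}$, where $A_i$ is the change‑of‑basis matrix $[\partial b_{i+1}\sqcup b_i/c_i]$ for subsets $b_i\subseteq c_i$ of the simplicial basis chosen so that $\partial_i^{\chi}(b_i)$ is a basis of $\Im\partial_i^{\chi}$ (possible because $\partial_i^{\chi}c_i$ spans $\Im\partial_i^{\chi}$, and then $\partial_{i+1}^{\chi}b_{i+1}\sqcup b_i$ is a basis of $C_i$ by acyclicity). Expanding $\det A_i$ along the standard‑basis columns coming from $b_i$, one sees that $\det A_i$ equals, up to sign, a square minor of $\partial_{i+1}^{\chi}$; in particular it lies in $\Z[\zeta_N]$, is nonzero, and inherits the sparsity and the modulus‑one bound on its entries. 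Hadamard's inequality applied to the rows (each with at most $\Lambda$ nonzero entries of modulus one) then gives $\lvert\det A_i\rvert\le\Lambda^{S/2}=\exp(O(S))$, with the implied constant depending only on $\Lambda$ and $\dim K$. For the inverted factors I would bound $\lvert\det A_i\rvert$ from below: for $\sigma\in\Gal(\Q(\zeta_N)/\Q)$ the conjugate $\sigma(\det A_i)$ is the analogous minor for the conjugate character $\chi^{\sigma}$, whose twisted differentials have the same sparsity and root‑of‑unity entries, so $\lvert\sigma(\det A_i)\rvert\le\exp(O(S))$ for every $\sigma$; since $\mathrm{N}_{\Q(\zeta_N)/\Q}(\det A_i)$ is a nonzero rational integer, it has absolute value $\ge 1$, whence $\lvert\det A_i\rvert\ge\prod_{\sigma\neq 1}\lvert\sigma(\det A_i)\rvert^{-1}\ge\exp(-O(S))$. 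Combining these estimates over the at most $\dim K+1$ factors and the finitely many characters $\chi$ yields $\lvert\det\rho(\tau)\rvert=\exp(O(S))$ for all relevant $\rho$, which is the assertion.

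I expect the main obstacle to be the lower bound in the last step: one must be sure that the determinants produced by the torsion formula genuinely lie in the ring of cyclotomic integers — which is exactly why the auxiliary bases $b_i$ should be chosen \emph{among the simplices} rather than as arbitrary lifts — and that every Galois conjugate is controlled by the same Hadamard estimate (uniformly in the complex). A secondary technical point is the combinatorial bookkeeping of the Milnor–Turaev formula: checking which minor occurs with which sign and that none is omitted, so that the alternating product really reconstructs the torsion of $K$; here I would lean on the independence of the torsion from the choice of the $b_i$, and on the simplicial map realizing an $h$‑cobordism being itself simplicial so that its mapping cone has the same sparsity and size bounds.
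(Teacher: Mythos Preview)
Your argument is correct, and it follows a genuinely different line from the paper's. The paper appeals to the Ray--Singer style expression
\[
\tau \;=\; \exp\Bigl(\sum_q (-1)^{q+1}\,q\,\log\det\Delta_q\Bigr),
\]
and simply observes that for a complex of bounded geometry the combinatorial Laplacians $\Delta_q$ are at most $S\times S$ matrices with uniformly bounded operator norm, so $\det\Delta_q = \exp(O(S))$; from this the conclusion is declared to follow ``trivially''. You instead work with the Milnor--Turaev alternating product of change-of-basis determinants, identify each factor with a square minor of a twisted boundary operator, bound it above by Hadamard's inequality (using the row-sparsity coming from bounded geometry), and bound it below via the Galois-norm trick: all conjugates satisfy the same Hadamard estimate, the norm is a nonzero rational integer, hence each factor is at least $\exp(-O(S))$.

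The Laplacian route is shorter because the matrices involved are canonical (no auxiliary sub-bases $b_i$ to choose), but it is silent on exactly the point you take care over: the alternating signs in the exponent mean that a \emph{lower} bound on $\det\Delta_q$ is just as necessary as the upper bound, and the same cyclotomic-norm argument you give is what supplies it there as well. Your approach has the compensating virtue of making transparent \emph{why} the relevant determinants are cyclotomic integers (they are honest minors of $\partial^{\chi}$, precisely because you insisted on choosing the $b_i$ among the simplices), and of isolating the exact role of bounded geometry as the row-sparsity constant $\Lambda$. Your remark that $\sigma(\det A_i)$ is automatically nonzero---being the image of a nonzero element under a field automorphism---even when the same sub-bases might fail to compute the torsion for $\chi^{\sigma}$, is the right way to handle the one genuine subtlety in the Galois step.
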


This follows from the definition of the Reidemeister torsion as\cite{Ray-Singer71}:
\[
Exp(\Sigma(-l)^{q+1} \cdot q \cdot \log( \det(\Delta_q)))
\]
The logarithm of this gives an additive injection (as one varies over irreducible flat bundles) of  $Wh(\Z/N)$ into $\R^{[N/2]}$ (whose image is a lattice). Since the complex has bounded geometry the  determinants of the relevant Laplacians trivially grow like $exp(S)$ giving the result. 

The constructions do not interfere with each other, so we obtain the full number of manifolds desired.
\end{proof}

\section{Density of Cheeger-Gromov $L^2$ $\rho$-invariants over structure sets}\label{sec:density}

We end this paper by presenting the proof of Theorem~\ref{thm:CW}: suppose $M$ is a closed oriented manifold of of dimension $4k+3$, where $k>0$, and $\pi = \pi_{1}(M)$ has finite subgroups of arbitrary large order. Then the values of $\rho^{(2)}(M’)$ as $M'$ varies over manifolds homotopy equivalent to $M$ is a dense subset of $\R$.  This implies that the group $S(M)$ of such manifolds (studied by surgery theory) is not finitely generated.

This section can be viewed as essentially independent of the rest of the paper.

\begin{proof}[Proof of Theorem~\ref{thm:CW}]
The proof is a variant of the theorem of~\cite{CW} and uses the surgery exact sequence and some related maps regarding an arbitrary finite subgroup $G$ of $\pi$:
\[
\adjustbox{scale=1,center}{%
\begin{tikzcd}
& L_{0}(G) \arrow[d, ""] &\\
H_{0}(M;\textbf{L}) \arrow[r, ""] & L_{0}(\pi) \arrow[r, ""] \arrow[d, ""] & S(M) \arrow[d, ""]\\
& \R \arrow[r, "="]
& \R
\end{tikzcd}
}
\]
The map $L_{0}(\pi)$ to $\R$ assigns to a symmetric quadratic form over $\Z{\pi}$ the difference between its $L^2$ signature and its ordinary signature since the $L^2$ signature is natural with respect to inclusion and there is also an ordinary signature obtained by mapping a group to the trivial group, and then just taking the signature. The composite with the map $H_{0}(M;\textbf{L}) \to L_{0}(\pi)$ is trivial by Atiyah’s theorem, which generalizes the multiplicativity property of ordinary signature for finite coverings to infinite ones~\cite{A76}. The map $L_{0}(\pi) \to S(M)$ is the Wall realization, and the commutativity of the square is simply the fact that  $(\rho^{(2)}(M’)-\rho^{(2)}(M))-(\rho(M’)-\rho(M))$ is the same as difference of signatures arising in the normal cobordism between $M’$ and $M$.  Now if $G$ is finite group, Wall~\cite[Theorem 13A.4]{Wall99} showed that the transfer map $L_{0}(G) \to L_{0}(e)$ is surjective. It is $\lvert G \rvert$ times the $L^2$ signature when $G$ is finite. This factor gives rise to denominators in the rho invariant.

Since the composite with the inclusion $L_{0}(e) \to L_{0}(G) \to L_{0}(e)$ is multiplication by $\lvert G \rvert$, we can find an element which has signature $0$, and whose $G$-fold cover has signature $8$. (In $L$-theory, the image of trivial group $L$ in $\Z$ is the multiples of $8$ because the quadratic forms that arise in L-theory always have even entries on the diagonal, which forces the signature to be a multiple of $8$ as the generator of $L_0(e)$ is the $E_8$ quadratic form.) This element has gives realization of the element $8/{\lvert G \rvert}$ and all its multiples. As, by hypothesis, $\pi_{1}(M)$ has subgroups of arbitrarily large order, this image is dense. 

For each $G$ the image of $L_{0}(G) \to L_{0}(\pi) \to S(M)$ is a subgroup, and as $\lvert G \rvert$ increases this set of subgroups is evidently not stabilizing (as their images in $\R$ are not), so $S(M)$ cannot be a finitely generated group.
\end{proof}

\begin{remark}
We have shown that for such a group, the L-groups $L_0(ZG)$ and $L_0(RG)$ are infinitely generated (even modulo torsion).  The same argument applies to $K_0(C^{\ast}G)$ which can be used to prove an analogue of our theorem for metrics of positive scalar curvature on appropriate spin manifolds. In other words, the concordance classes of metrics of positive scalar curvature on a spin manifold has an abelian group structure (see~\cite[Section 4]{WY15}). We can get a lower bound on this group by a similar method, using an $L^2$ rho invariant associated to the Dirac operator.
\end{remark}

\bibliographystyle{abbrv}
\bibliography{research.bib}

\end{document}